\newcommand{\bbbr}{\mathbb{R}}
\newcommand{\bbbn}{\mathbb{N}}
\newcommand{\Idx}{\mathcal{I}}
\newcommand{\Jdx}{\mathcal{J}}
\newcommand{\Kdx}{\mathcal{K}}
\newcommand{\ctI}{\mathcal{T}_{\Idx}}
\newcommand{\ctJ}{\mathcal{T}_{\Jdx}}
\newcommand{\ctK}{\mathcal{T}_{\Kdx}}
\newcommand{\lfI}{\mathcal{L}_{\Idx}}
\newcommand{\ctIJ}{\mathcal{T}_{\Idx\times\Jdx}}
\newcommand{\ctJK}{\mathcal{T}_{\Jdx\times\Kdx}}
\newcommand{\ctIK}{\mathcal{T}_{\Idx\times\Kdx}}
\newcommand{\lfIJ}{\mathcal{L}_{\Idx\times\Jdx}}
\newcommand{\lfJK}{\mathcal{L}_{\Jdx\times\Kdx}}
\newcommand{\lfaIJ}{\mathcal{L}_{\Idx\times\Jdx}^+}
\newcommand{\lfiIJ}{\mathcal{L}_{\Idx\times\Jdx}^-}
\newcommand{\lfaJK}{\mathcal{L}_{\Jdx\times\Kdx}^+}
\newcommand{\lfiJK}{\mathcal{L}_{\Jdx\times\Kdx}^-}
\newcommand{\lfaIK}{\mathcal{L}_{\Idx\times\Kdx}^+}
\newcommand{\ctIJK}{\mathcal{T}_{\Idx\times\Jdx\times\Kdx}}
\newcommand{\lfIJK}{\mathcal{L}_{\Idx\times\Jdx\times\Kdx}}
\newcommand{\lfaIJK}{\mathcal{L}_{\Idx\times\Jdx\times\Kdx}^+}
\newcommand{\Cp}{C_p}
\newcommand{\dist}{\mathop{\operatorname{dist}}\nolimits}
\newcommand{\diam}{\mathop{\operatorname{diam}}\nolimits}
\newcommand{\chil}{\mathop{\operatorname{chil}}\nolimits}
\newcommand{\desc}{\mathop{\operatorname{desc}}\nolimits}
\newcommand{\treeroot}{\mathop{\operatorname{root}}\nolimits}
\newcommand{\supp}{\mathop{\operatorname{supp}}\nolimits}
\newcommand{\accu}{\mathop{\operatorname{accu}}\nolimits}
\newcommand{\yield}{\mathop{\operatorname{yield}}\nolimits}
\newcommand{\level}{\mathop{\operatorname{level}}\nolimits}
\newcommand{\brow}{\mathop{\operatorname{row}}\nolimits}
\newcommand{\bcol}{\mathop{\operatorname{col}}\nolimits}
\newcommand{\trans}{\mathop{\operatorname{mul}}\nolimits}
\newcommand{\finish}{\mathop{\operatorname{finish}}\nolimits}
\newcommand{\add}{\mathop{\operatorname{add}}\nolimits}
\newcommand{\bsplit}{\mathop{\operatorname{split}}\nolimits}
\newcommand{\admbasistree}{\mathop{\operatorname{uniform}}\nolimits}
\newcommand{\inadmbasistree}{\mathop{\operatorname{nearfield}}\nolimits}
\newtheorem{theorem}{Theorem}
\newtheorem{definition}[theorem]{Definition}
\newtheorem{lemma}[theorem]{Lemma}
\newtheorem{remark}[theorem]{Remark}
\newenvironment{proof}{\emph{Proof.}}{\hfill $\Box$}
\begin{document}

\renewcommand{\topfraction}{0.9}
\renewcommand{\textfraction}{0.1}
\renewcommand{\floatpagefraction}{0.9}

\title{Adaptive multiplication of $\mathcal{H}^2$-matrices with
  block-relative error control}
\author{Steffen B\"orm}

\maketitle

\begin{abstract}
The discretization of non-local operators, e.g., solution operators
of partial differential equations or integral operators, leads to
large densely populated matrices.
$\mathcal{H}^2$-matrices take advantage of local low-rank structures
in these matrices to provide an efficient data-sparse approximation
that allows us to handle large matrices efficiently, e.g., to reduce
the storage requirements to $\mathcal{O}(n k)$ for $n$-dimensional
matrices with local rank $k$, and to reduce the complexity of the
matrix-vector multiplication to $\mathcal{O}(n k)$ operations.

In order to perform more advanced operations, e.g., to construct
efficient preconditioners or evaluate matrix functions, we require
algorithms that take $\mathcal{H}^2$-matrices as input and approximate
the result again by $\mathcal{H}^2$-matrices, ideally with controllable
accuracy.
In this manuscript, we introduce an algorithm that approximates
the product of two $\mathcal{H}^2$-matrices and guarantees block-relative
error estimates for the submatrices of the result.
It uses specialized tree structures to represent the exact product
in an intermediate step, thereby allowing us to apply mathematically
rigorous error control strategies.
\end{abstract}

% ------------------------------------------------------------
% Introduction
% ------------------------------------------------------------
\section{Introduction}

In order to treat partial differential or integral equations numerically,
discretization schemes are required that lead to systems of linear
equations that have to be solved in order to obtain approximations
of the solutions of the original equations.
Highly accurate approximations typically require large systems of
linear equations, and treating these large systems efficiently poses
a significant challenge.

In the context of elliptic partial differential equations, standard
finite element or finite difference discretization schemes lead to
\emph{sparse} matrices, i.e., to matrices with only a small number
of non-zero coefficients, and this structure can be used to handle
these matrices very efficiently.

The corresponding solution operators, however, are generally
\emph{dense} matrices, i.e., most or even all coefficients are
non-zero, and specialized techniques have to be used to treat these
matrices efficiently.

The same problem arises in the context of integral equations, where
standard discretization schemes immediately lead to dense matrices.

For integral equations, the \emph{fast multipole method}
\cite{RO85,GRRO87,GRRO97} offers an efficient approach: the kernel
function underlying the equation is locally approximated by
an expansion that leads to factorized low-rank representations
of submatrices and allows us to reduce the complexity for an
$n$-dimensional matrix to $\mathcal{O}(n k)$, where the local
rank $k$ can be used to control the accuracy of the approximation.

The closely related \emph{panel clustering method} \cite{HANO89}
extends these ideas to more general integral operators with a
complexity of $\mathcal{O}(n k \log n)$, while refined versions
reduce the complexity to $\mathcal{O}(n)$ by ensuring that the
matrix approximation is just accurate enough to keep track with
the discretization error \cite{SA00,BOSA03}.

\emph{Hierarchical matrices} \cite{HA99,GRHA02,HA15} use general
factorized low-rank representations to significantly extend the
scope of applications while keeping the close-to-optimal complexity
of $\mathcal{O}(n k \log n)$.
Using purely algebraic methods like the singular value decomposition
or rank-revealing factorizations allows hierarchical matrices to
approximate products and inverses of matrices, to obtain robust
preconditioners using an approximate LU factorization, and even
to evaluate matrix functions \cite{GAHAKH00} and solve certain matrix
equations \cite{GR01a,GRHAKH02,BA08} in $\mathcal{O}(n k^\alpha \log^\beta n)$
operations for small values of $\alpha,\beta>0$.

Most advanced applications of hierarchical matrices are based
on an algorithm for approximating the product of two
hierarchical matrices within a given block structure.
Applying this algorithm recursively to submatrices allows us
to approximate the inverse and the LU factorization, which
in turn give rise to algorithms for matrix functions.

\emph{$\mathcal{H}^2$-matrices} \cite{HAKHSA00,BOHA02,BO10} are
a special class of hierarchical matrices that makes use of nested
multi-level bases --- similar to fast multipole methods --- to reduce
the storage complexity to $\mathcal{O}(n k)$.
The nested representation of the bases appears as both a blessing
and a curse:
on the one hand, using nested bases makes algorithms considerably
more efficient, on the other hand, finding good nested bases for
the result of an algebraic operation poses a considerable challenge.

For very simple block structures \cite{HAKHKR04,CHGULY05,CHGULIXI09b},
efficient algorithms for performing important algebraic operations
with $\mathcal{H}^2$-matrices are available, but the more general
structures required for applications in two- and three-dimensional
space lead to complicated rank structures that are not easily taken
advantage of.
The central task is finding good bases for the result of an operation,
since once the bases are known, efficient orthogonal projections can
be applied to obtain the final result \cite{BO04a} in linear complexity
$\mathcal{O}(n k^2)$.

A first attempt for approximating the product of two
$\mathcal{H}^2$-matrices by an $\mathcal{H}^2$-matrix with
adaptively chosen bases guaranteeing a given accuracy uses a sequence
of local low-rank updates \cite{BORE14} with $\mathcal{O}(n k^2 \log n)$
complexity, but the relatively complicated memory access patterns of
these updates make the practical performance of this algorithm unattractive
for many applications.

In this article, a new algorithm is presented that prepares the
\emph{exact} representation of the product of two $\mathcal{H}^2$-matrices
in an intermediate step and then applies a rank-revealing factorization
to obtain an $\mathcal{H}^2$-matrix approximation of the product.
A major advantage of the new algorithm compared to its predecessors
is that it can guarantee rigorous relative error estimates for every
submatrix appearing in the result.

The next section of the article gives a short introduction to the
structure of $\mathcal{H}^2$-matrices and summarizes their most
important properties.
The following section presents an in-depth analysis of the structure
of the product of two $\mathcal{H}^2$-matrices and leads to a
representation of the \emph{exact} product using \emph{basis trees}.
Once we have the exact product at our disposal, the next section
describes how we can apply low-rank factorizations to construct an
approximation by an $\mathcal{H}^2$-matrix with adaptively chosen bases.
The final section contains numerical experiments demonstrating that
the new algorithm has a complexity of $\mathcal{O}(n k^2 \log n)$
and is indeed able to guarantee the given accuracy.

% ------------------------------------------------------------
% H^2-matrices
% ------------------------------------------------------------
\section{\texorpdfstring{$\mathcal{H}^2$-matrices}{H2-matrices}}

$\mathcal{H}^2$-matrices can be considered the algebraic counterparts
of fast multipole approximations of non-local operators:
for a given matrix $G\in\bbbr^{\Idx\times\Jdx}$ with row index set $\Idx$
and column index set $\Jdx$, submatrices $G|_{\hat t\times\hat s}$
with $\hat t\subseteq\Idx$ and $\hat s\subseteq\Jdx$ are chosen that
have numerically low rank and can therefore be approximated by
low-rank factorizations.

We consider a matrix resulting from the Galerkin discretization of an
integral operator as a motivating example:
let $(\varphi_i)_{i\in\Idx}$ and $(\psi_j)_{j\in\Jdx}$ be families of
basis functions on a domain or manifold $\Omega$, and let
\begin{equation*}
  g\colon \Omega\times\Omega\to\bbbr
\end{equation*}
be a kernel function.
We consider the matrix $G\in\bbbr^{\Idx\times\Jdx}$ given by
\begin{align}\label{eq:matrix}
  g_{ij} &= \int_\Omega \varphi_i(x) \int_\Omega g(x,y) \psi_j(y) \,dy \,dx &
  &\text{ for all } i\in\Idx,\ j\in\Jdx.
\end{align}
In many important applications, the kernel function $g$ is smooth
outside the diagonal, i.e., $g|_{\Omega_t\times\Omega_s}$ can be
efficiently approximated by polynomials if the \emph{target and source
subsets} $\Omega_t,\Omega_s\subseteq\Omega$ are well-separated
\cite[Chapter~4]{BO10}, i.e., if the
\emph{admissibility condition}
\begin{equation}\label{eq:admissibility}
  \max\{\diam(\Omega_t),\diam(\Omega_s)\}
  \leq 2 \eta \dist(\Omega_t,\Omega_s)
\end{equation}
holds with a parameter $\eta>0$.
Fixing interpolation points $(\xi_{t,\nu})_{\nu=1}^k$ and
$(\xi_{s,\mu})_{\mu=1}^k$ for $\Omega_t$ and $\Omega_s$, the interpolation
of the kernel function yields
\begin{align}\label{eq:interpolation}
  g(x,y) &\approx \sum_{\nu=1}^k \sum_{\mu=1}^k
              g(\xi_{t,\nu}, \xi_{s,\mu})
              \ell_{t,\nu}(x) \ell_{s,\mu}(y) &
  &\text{ for all } x\in\Omega_t,\ y\in\Omega_s,
\end{align}
where $(\ell_{t,\nu})_{\nu=1}^k$ and $(\ell_{s,\mu})_{\mu=1}^k$ are the
Lagrange polynomials for $\Omega_t$ and $\Omega_s$.
For the matrix entries, this approximation yields
\begin{align*}
  g_{ij} &\approx \int_\Omega \varphi_i(x) \int_\Omega
             \sum_{\nu=1}^k \sum_{\mu=1}^k
             g(\xi_{t,\nu}, \xi_{s,\mu})
             \ell_{t,\nu}(x) \ell_{s,\mu}(y) \psi_j(y) \,dy \,dx\\
  &= \sum_{\nu=1}^k \sum_{\mu=1}^k
     \underbrace{\int_\Omega \varphi_i(x) \ell_{t,\nu}(x) \,dx}_{=:v_{t,i\nu}}
     \underbrace{g(\xi_{t,\nu},\xi_{s,\mu})}_{=:s_{ts,\nu\mu}}
     \underbrace{\int_\Omega \psi_j(y) \ell_{s,\mu}(y) \,dy}_{=:w_{s,j\mu}}
\end{align*}
for all $i\in\Idx$ with $\supp\varphi_i\subseteq\Omega_t$ and all
$j\in\Jdx$ with $\supp\psi_j\subseteq\Omega_s$.
Introducing the index subsets
\begin{align*}
  \hat t &:= \{ i\in\Idx\ :\ \supp\varphi_i\subseteq\Omega_t \}, &
  \hat s &:= \{ j\in\Jdx\ :\ \supp\psi_j\subseteq\Omega_s \}
\end{align*}
and collecting the entries $v_{t,i\nu}$, $w_{s,j\mu}$ and $s_{ts,\nu\mu}$
in matrices $V_t\in\bbbr^{\hat t\times k}$, $W_s\in\bbbr^{\hat s\times k}$,
and $S_{ts}\in\bbbr^{k\times k}$, we can write this equation in
the compact form
\begin{equation}\label{eq:vsw}
  G|_{\hat t\times\hat s} \approx V_t S_{ts} W_s^*,
\end{equation}
where $W_s^*$ denotes the transposed of the matrix $W_s$.
The admissibility condition (\ref{eq:admissibility}) guarantees that
interpolation converges rapidly, so the number $k$ of interpolation
points will typically be significantly smaller than $|\hat t|$
and $|\hat s|$, which makes (\ref{eq:vsw}) a low-rank approximation
of the submatrix $G|_{\hat t\times\hat s}$.

Our task is to split the matrix $G$ into submatrices $G|_{\hat t\times\hat s}$
that can either be approximated, i.e., that satisfy the admissibility
condition (\ref{eq:admissibility}), or that are small enough to be
stored explicitly.
\emph{Cluster trees} have proven to be a very useful tool for constructing
these submatrices efficiently.
In the following definition, we focus on the index sets $\hat t$ and
assume that the corresponding subdomains $\Omega_t$ with
\begin{align*}
  \supp\varphi_i &\subseteq \Omega_t &
  &\text{ for all } i\in\hat t
\end{align*}
are provided by a suitable algorithm.

%
% Definition: Cluster tree
%
\begin{definition}[Cluster tree]
\label{de:cluster_tree}
Let $\mathcal{T}$ be a finite labeled tree, where the label of
each node $t\in\mathcal{T}$ is denoted by $\hat t$ and the set of children
is denoted by $\chil(\mathcal{T},t)$ (or short $\chil(t)$ if the tree
is clear from the context).
$\mathcal{T}$ is called a \emph{cluster tree} for the index set $\Idx$ if
\begin{itemize}
  \item for every node $t\in\mathcal{T}$, the label $\hat t$ is a subset
     of the index set $\Idx$,
  \item the label of the root is $\Idx$,
  \item for $t\in\mathcal{T}$ and $t_1,t_2\in\chil(t)$ we have
    $t_1\neq t_2 \Rightarrow \hat t_1\cap\hat t_2=\emptyset$, i.e.,
    the labels of siblings are disjoint, and
  \item for $t\in\mathcal{T}$ with $\chil(t)\neq\emptyset$ we have
    $\hat t = \bigcup_{t'\in\chil(t)} \hat t'$, i.e., the labels of
    the children correspond to a disjoint partition of the label
    of their parent.
\end{itemize}
A cluster tree for the index set $\Idx$ is usually denoted by $\ctI$,
its nodes are called \emph{clusters}, and its leaves are denoted by
$\lfI := \{ t\in\ctI\ :\ \chil(t)=\emptyset \}$.
\end{definition}

Sometimes we have to work with all clusters descended from a given
cluster $t$.
This corresponding \emph{set of descendants} is defined inductively
by
\begin{align*}
  \desc(t) &:= \{t\} \cup \bigcup_{t'\in\chil(t)} \desc(t') &
  &\text{ for all } t\in\ctI.
\end{align*}

A simple induction shows that the labels of the leaf clusters $\lfI$
form a disjoint partition of the index set $\Idx$.

Using two cluster trees $\ctI$ and $\ctJ$ for the row and column
index sets $\Idx$ and $\Jdx$, we can construct a partition of the
index set $\Idx\times\Jdx$ of a matrix $G\in\bbbr^{\Idx\times\Jdx}$
that can serve as a foundation for the approximation process.

%
% Definition: Block tree
%
\begin{definition}[Block tree]
\label{de:block_tree}
Let $\ctI$ and $\ctJ$ be cluster trees for index sets $\Idx$ and $\Jdx$.
Let $\mathcal{T}$ be a finite labeled tree, where the label of a node
$b\in\mathcal{T}$ is denoted by $\hat b$ and its children by
$\chil(b)$.

$\mathcal{T}$ is called a \emph{block tree} for $\ctI$ and $\ctJ$ if
\begin{itemize}
  \item for every node $b\in\mathcal{T}$, there are $t\in\ctI$ and
     $s\in\ctJ$ with $b=(t,s)$ and $\hat b=\hat t\times\hat s$,
  \item the root $b=\treeroot(\mathcal{T})$ of $\mathcal{T}$ is the pair
     of the roots of $\ctI$ and
     $\ctJ$, i.e., $b=(\treeroot(\ctI),\treeroot(\ctJ))$,
  \item for every node $b=(t,s)\in\mathcal{T}$ with $\chil(b)\neq\emptyset$
     we have
     \begin{equation}\label{eq:block_children}
       \chil(b) = \begin{cases}
         \{t\}\times\chil(s) &\text{ if } \chil(t)=\emptyset,\\
         \chil(t)\times\{s\} &\text{ if } \chil(s)=\emptyset,\\
         \chil(t)\times\chil(s) &\text{ otherwise}.
       \end{cases}
     \end{equation}
\end{itemize}
A block tree for $\ctI$ and $\ctJ$ is usually denoted by $\ctIJ$, its
nodes a called \emph{blocks}, and its leaves are denoted by
$\lfIJ := \{ b\in\ctIJ\ :\ \chil(b)=\emptyset \}$.

For $b=(t,s)\in\ctIJ$, $t$ and $s$ are called the \emph{row}
and \emph{column cluster} of $b$, respectively.
\end{definition}

If $\ctI$ and $\ctJ$ are cluster trees for the index sets $\Idx$
and $\Jdx$, a block tree $\ctIJ$ is a special cluster tree for the
Cartesian product $\Idx\times\Jdx$.
This implies that the leaves of the block tree form a disjoint
partition of $\Idx\times\Jdx$, i.e., they describe a decomposition
of a matrix $G\in\bbbr^{\Idx\times\Jdx}$ into non-overlapping
submatrices $G|_{\hat t\times\hat s}$ for $b=(t,s)\in\lfIJ$.

Our goal is now to split a given matrix $G\in\bbbr^{\Idx\times\Jdx}$
into submatrices that either are admissible, i.e., can be approximated
by low rank, or small, so that we can afford to store them directly.
In the interest of efficiency, we want the number of submatrices to
be as small as possible.

This task can be solved by a simple recursive procedure:
we start with the root $(\treeroot(\ctI),\treeroot(\ctJ))$ of the
block tree $\ctIJ$.

Given a block $b=(t,s)\in\ctIJ$, we check if it is admissible, i.e.,
if (\ref{eq:admissibility}) holds.
If $b$ is admissible, we keep it as a leaf of the block tree.

If $b$ is not admissible, we check whether $t$ and $s$ have children.
If at least one of them does, we construct the children of $b$ via
(\ref{eq:block_children}) and proceed to check them recursively.

If $t$ and $s$ both have no children, we keep $b$ as an
inadmissible leaf of the block tree.

This procedure yields a block tree with leaves that are either
admissible or consist of clusters that have no children, i.e., the
leaves $\lfIJ$ of the block tree $\ctIJ$ are split into the admissible
leaves $\lfaIJ$ satisfying (\ref{eq:admissibility}) and
the inadmissible leaves $\lfiIJ$.

For inadmissible leaves $b=(t,s)\in\lfiIJ$, our construction implies
that $t$ and $s$ are leaves of the cluster trees $\ctI$ and $\ctJ$, and
we can ensure that leaf clusters have only a small number of indices, so that
$G|_{\hat t\times\hat s}$ is a small matrix that we can afford to store
explicitly.

For admissible leaves $b=(t,s)\in\lfaIJ$, we can use the low-rank
approximation (\ref{eq:vsw}).
This approximation has the useful property that $V_t$ and $W_s$ depend
only on the row cluster $t$ and the column cluster $s$, respectively,
while only the small $k\times k$ matrix $S_{ts}$ depends on both clusters.
This means that we only have to store the \emph{coupling matrix}
$S_{ts}$ for every admissible leaf $b=(t,s)\in\lfaIJ$.

The matrix families $(V_t)_{t\in\ctI}$ and $(W_s)_{s\in\ctJ}$ have a
property that we can use to improve the efficiency of our approximation
even further:
let $t\in\ctI$ and $t'\in\chil(t)$.
We can assume $\Omega_{t'}\subseteq\Omega_t$, and if we use the same
order of interpolation for all clusters, the identity theorem for
polynomials yields
\begin{align*}
  \ell_{t,\nu}
  &= \sum_{\nu'=1}^k
    \underbrace{\ell_{t,\nu}(\xi_{t',\nu'})}_{=:e_{t',\nu'\nu}}
    \ell_{t',\nu'} &
  &\text{ for all } \nu\in[1:k].
\end{align*}
For every $i\in\hat t'$, we have $i\in\hat t$ and therefore
\begin{align*}
  v_{t,i\nu}
  &= \int_\Omega \varphi_i(x) \ell_{t,\nu}(x) \,dx
   = \sum_{\nu'=1}^k \int_\Omega \varphi_i(x) \ell_{t',\nu'}(x) \,dx
              \,e_{t',\nu'\nu}
   = \sum_{\nu'=1}^k v_{t',i\nu'} e_{t',\nu'\nu}.
\end{align*}
In short, we have $V_t|_{\hat t'\times k} = V_{t'} E_{t'}$.
By Definition~\ref{de:cluster_tree}, every index $i\in\hat t$ appears
in exactly one child $t'\in\chil(t)$, therefore $V_t$ is completely
determined by the matrices $V_{t'}$ for the children and the
\emph{transfer matrices} $E_{t'}$.

%
% Definition: Cluster basis
%
\begin{definition}[Cluster basis]
\label{de:cluster_basis}
Let $V=(V_t)_{t\in\ctI}$ be a family of matrices satisfying
$V_t\in\bbbr^{\hat t\times k}$ for every $t\in\ctI$.
$V$ is called a \emph{cluster basis} for $\ctI$ if for every $t\in\ctI$ with
$t'\in\chil(t)$ there is a matrix $E_{t'}\in\bbbr^{k\times k}$
such that
\begin{equation}\label{eq:transfer}
  V_t|_{\hat t'\times k} = V_{t'} E_{t'}.
\end{equation}
The matrices $E_{t'}$ are called \emph{transfer matrices}.
The matrices $V_t$ for leaf clusters $t\in\lfI$ are called \emph{leaf matrices}.
\end{definition}

A simple induction yields that the entire cluster basis $V$ is defined
by the transfer matrices $E_t$ for all $t\in\ctI$ and the leaf matrices
$V_t$ for leaf clusters $t\in\lfI$.
It is important to note that the matrices $V_t$ for non-leaf clusters
$t\in\ctI\setminus\lfI$ do not have to be stored, but can be reconstructed
using the transfer matrices.
Under standard assumptions, this reduces the storage requirements for
the cluster basis from $\mathcal{O}(n k \log n)$ to $\mathcal{O}(n k)$
\cite{BOHA02,BO10}.

While the equation (\ref{eq:transfer}) holds exactly for kernel expansions
with a constant polynomial order, it can be replaced by analytic
\cite{BO22} or algebraic approximations \cite{BOHA02} while keeping
the error under control.

%
% Definition: H2-matrix
%
\begin{definition}[$\mathcal{H}^2$-matrix]
Let $V=(V_t)_{t\in\ctI}$ and $W=(W_s)_{s\in\ctJ}$ be cluster bases
for $\ctI$ and $\ctJ$, and let $\ctIJ$ be a block tree for $\ctI$ and $\ctJ$.
A matrix $G\in\bbbr^{\Idx\times\Jdx}$ is called an
\emph{$\mathcal{H}^2$-matrix} with the \emph{row basis} $V$ and
the \emph{column basis} $W$ if for every admissible leaf
$b=(t,s)\in\lfaIJ$ there is a matrix $S_{ts}\in\bbbr^{k\times k}$ with
\begin{equation}\label{eq:vsw_exact}
  G|_{\hat t\times\hat s} = V_t S_{ts} W_s^*.
\end{equation}
The matrix $S_{ts}$ is called the \emph{coupling matrix} for the
block $b$.
\end{definition}

Under standard assumptions, an $\mathcal{H}^2$-matrix can be represented
by the coupling matrices $S_{ts}$ for all admissible leaves $b=(t,s)\in\lfaIJ$
and the \emph{nearfield matrices} $G|_{\hat t\times\hat s}$ for all
inadmissible leaves $b=(t,s)\in\lfiIJ$ in $\mathcal{O}(n k)$ units of
storage \cite{BOHA02,BO10}.
For $(t,s)\in\ctIJ$, the matrix-vector multiplications with submatrices
$x \mapsto G|_{\hat t\times\hat s} x$ and
$x \mapsto G|_{\hat t\times\hat s}^* x$ require
$\mathcal{O}((|\hat t|+|\hat s|) k)$ operations.
The multiplication with an $n$-dimensional $\mathcal{H}^2$-matrix
requires $\mathcal{O}(n k)$ operations.

% ------------------------------------------------------------
% Products of H^2-matrices
% ------------------------------------------------------------
\section{Products of \texorpdfstring{$\mathcal{H}^2$-matrices}{H2-matrices}}

While the multiplication of an $\mathcal{H}^2$-matrix with a vector
is a relatively simple operation, multiplying two $\mathcal{H}^2$-matrices
poses a significant challenge, particularly if we want to approximate
the result again by an efficient $\mathcal{H}^2$-matrix representation.

In this section, we are looking for structures in the product of
two $\mathcal{H}^2$-matrices that we can use to construct an efficient
algorithm.

%
% --- Semi-uniform matrices
%
\subsection{Semi-uniform matrices}

Let $\ctI$, $\ctJ$ and $\ctK$ be cluster trees for the index
sets $\Idx$, $\Jdx$ and $\Kdx$.
Let $X\in\bbbr^{\Idx\times\Jdx}$ be an $\mathcal{H}^2$-matrix with
row basis $V_X=(V_{X,t})_{t\in\ctI}$ and column basis
$W_X=(W_{X,s})_{s\in\ctJ}$ for the block tree $\ctIJ$,
and let $Y\in\bbbr^{\Jdx\times\Kdx}$ be an $\mathcal{H}^2$-matrix with
row basis $V_Y=(V_{Y,s})_{s\in\ctJ}$ and column basis
$W_Y=(W_{Y,r})_{r\in\ctK}$ for the block tree $\ctJK$.
We denote the transfer matrices for $V_X$, $W_X$, $V_Y$, and $W_Y$
by $(E_{X,t})_{t\in\ctI}$, $(F_{X,s})_{s\in\ctJ}$, $(E_{Y,s})_{s\in\ctJ}$,
and $(F_{Y,r})_{r\in\ctK}$.

Our goal is to approximate the product $Z = X Y$ with a given error
tolerance.
We approach this task by deriving a representation for the \emph{exact}
product and then applying a compression algorithm to obtain a
sufficiently accurate approximation.

In order to be able to proceed by recursion, we consider sub-products
\begin{equation}\label{eq:subproduct}
  X|_{\hat t\times\hat s} Y|_{\hat s\times\hat r}
\end{equation}
with $(t,s)\in\ctIJ$ and $(s,r)\in\ctJK$.
If $(t,s)$ and $(s,r)$ both have children in $\ctIJ$ and $\ctJK$,
respectively, the sub-product (\ref{eq:subproduct}) can be expressed
by switching recursively to the children.
Definition~\ref{de:block_tree} ensures that products of the children
will again be of the form (\ref{eq:subproduct}).

A far more interesting case appears if $(t,s)$ or $(s,r)$ are leaves
of their respective block trees.
If $(s,r)\in\lfaJK$ holds, we have $Y|_{\hat s\times\hat r}
= V_{Y,s} S_{Y,sr} W_{Y,r}^*$ due to (\ref{eq:vsw_exact}) and
therefore
\begin{subequations}\label{eq:semiuniform}
\begin{equation}\label{eq:right_semiuniform}
  X|_{\hat t\times\hat s} Y|_{\hat s\times\hat r}
  = X|_{\hat t\times\hat s} V_{Y,s} S_{Y,sr} W_{Y,r}^*
  = A_{tr} W_{Y,r}^*
\end{equation}
with the matrix $A_{tr} := X|_{\hat t\times\hat s} V_{Y,s}
S_{Y,sr}\in\bbbr^{\hat t\times k}$.

If $(t,s)\in\lfaIJ$ holds, we have $X|_{\hat t\times\hat s}
= V_{X,t} S_{X,ts} W_{X,s}^*$ due to (\ref{eq:vsw_exact}) and
therefore
\begin{equation}\label{eq:left_semiuniform}
  X|_{\hat t\times\hat s} Y|_{\hat s\times\hat r}
  = V_{X,t} S_{X,ts} W_{X,s}^* Y|_{\hat s\times\hat r}
  = V_{X,t} B_{tr}^*
\end{equation}
\end{subequations}
with the matrix $B_{tr} := Y|_{\hat s\times\hat r}^* W_{X,s} S_{X,ts}^*
\in\bbbr^{\hat r\times k}$.

We call matrices of the form (\ref{eq:right_semiuniform})
\emph{right semi-uniform} and matrices of the form
(\ref{eq:left_semiuniform}) \emph{left semi-uniform}.
Sums of both, i.e., matrices of the form
\begin{equation*}
  A_{tr} W_{Y,r}^* + V_{X,t} B_{tr}^*,
\end{equation*}
are called \emph{semi-uniform}.
Semi-uniform matrices are a subspace of $\bbbr^{\hat t\times\hat r}$,
since for two semi-uniform matrices $A_{tr} W_{Y,r}^* + V_{X,t} B_{tr}^*$
and $C_{tr} W_{Y,r}^* + V_{X,t} D_{tr}^*$ we have
\begin{equation*}
  (A_{tr} W_{Y,r}^* + V_{X,t} B_{tr}^*)
  + (C_{tr} W_{Y,r}^* + V_{X,t} D_{tr}^*)
  = (A_{tr} + C_{tr}) W_{Y,r}^* + V_{X,t} (B_{tr} + D_{tr})^*,
\end{equation*}
i.e., the sum can be computed by adding $A_{tr}$ and $C_{tr}$ as
well as $B_{tr}$ and $D_{tr}$, respectively.

Using semi-uniform matrices, we can express submatrices
$Z|_{\hat t\times\hat r}$ of the result $Z=XY$ efficiently and
exactly.

%
% --- Accumulators
%
\subsection{Accumulators}

The special structure of semi-uniform matrices allows us to
represent submatrices of the product efficiently in a suitable
data structure.

%
% Definition: Accumulator
%
\begin{definition}[Accumulator]
\label{de:accumulator}
Let $t\in\ctI$ and $r\in\ctK$.

The quadruple $\mathcal{A}_{tr} := (A_{tr}, B_{tr}, N_{tr}, \mathcal{P}_{tr})$
with
\begin{itemize}
  \item $A_{tr}\in\bbbr^{\hat t\times k}$ for right-semiuniform matrices,
  \item $B_{tr}\in\bbbr^{\hat r\times k}$ for left-semiuniform matrices,
  \item $N_{tr}\in\bbbr^{\hat t\times\hat r}$ for inadmissible leaves and
  \item $\mathcal{P}_{tr} \subseteq \{ (s,X,Y)
      \ :\ s\in\ctJ,\ X\in\bbbr^{\hat t\times\hat s},
         \ Y\in\bbbr^{\hat s\times\hat r} \}$ for subdivided products
\end{itemize}
is called an \emph{accumulator} for products $X|_{\hat t\times\hat s}
Y|_{\hat s\times\hat r}$.
It represents the matrix
\begin{equation*}
  \accu(\mathcal{A}_{tr})
  := A_{tr} W_{Y,r}^* + V_{X,t} B_{tr}^* + N_{tr}
     + \sum_{(s,X,Y)\in\mathcal{P}_{tr}} X Y.
\end{equation*}
\end{definition}

An accumulator can be used to represent a sum of products
$X|_{\hat t\times\hat s} Y|_{\hat s\times\hat r}$:
we start with an accumulator representing the zero matrix, i.e.,
with $A_{tr}=0$, $B_{tr}=0$, $N_{tr}=0$, and $\mathcal{P}_{tr}=\emptyset$.
If we add a product $X|_{\hat t\times\hat s} Y|_{\hat s\times\hat r}$ to
the accumulator, we distinguish four cases:
\begin{enumerate}
  \item If $(s,r)\in\lfaJK$, we add $X|_{\hat t\times\hat s} V_{Y,s} S_{Y,sr}$
     to $A_{tr}$, which corresponds to adding
     \begin{equation*}
       X|_{\hat t\times\hat s} V_{Y,s} S_{Y,sr} W_{Y,r}^*
       = X|_{\hat t\times\hat s} Y|_{\hat s\times\hat r}
     \end{equation*}
     to $\accu(\mathcal{A}_{tr})$.
  \item If $(t,s)\in\lfaIJ$, we add $Y|_{\hat s\times\hat r}^* W_{X,s}
     S_{X,ts}^*$ to $B_{tr}$, which corresponds to adding
     \begin{equation*}
       V_{X,t} (Y|_{\hat s\times\hat r}^* W_{X,s} S_{X,ts}^*)^*
       = V_{X,t} S_{X,ts} W_{X,s}^* Y|_{\hat s\times\hat r}
       = X|_{\hat t\times\hat s} Y|_{\hat s\times\hat r}
     \end{equation*}
     to $\accu(\mathcal{A}_{tr})$.
  \item If $(t,s)\in\lfiIJ$ or $(s,r)\in\lfiJK$, we add the
     product $X|_{\hat t\times\hat s} Y|_{\hat s\times\hat r}$ to
     $N_{tr}$, and therefore also to $\accu(\mathcal{A}_{tr})$.
  \item Otherwise, we add the triple
     $(s,X|_{\hat t\times\hat s},Y|_{\hat s\times\hat r})$
     to the set $\mathcal{P}_{tr}$, which again corresponds to
     adding $X|_{\hat t\times\hat s} Y|_{\hat s\times\hat r}$
     to $\accu(\mathcal{A}_{tr})$.
\end{enumerate}
The first two cases require $k$ $\mathcal{H}^2$-matrix-vector
multiplications and therefore have a complexity of
$\mathcal{O}((|\hat t|+|\hat s|) k^2)$ and
$\mathcal{O}((|\hat s|+|\hat r|) k^2)$, respectively.
In the third case, either $X|_{\hat t\times\hat s}$ or
$Y|_{\hat s\times\hat r}$ is a small matrix, allowing us to update
$N_{tr}$ efficiently again.
The fourth case does not involve any computation, merely book-keeping.

%
% --- Splitting accumulators
%
\subsection{Splitting accumulators}

The set $\mathcal{P}_{tr}$ collects products that may have a complicated
structure and are therefore not immediately accessible for efficient
computation.
Our goal is to reach an accumulator with $\mathcal{P}_{tr}=\emptyset$,
since then we have an efficient low-rank representation of the result.

To achieve this goal, we ``split'' the accumulator, i.e., for all
children $t'\in\chil(t)$ and $r'\in\chil(r)$ we construct
accumulators $\mathcal{A}_{t'r'}$ with
\begin{equation*}
  \accu(\mathcal{A}_{t'r'}) = \accu(\mathcal{A}_{tr})|_{\hat t'\times\hat r'},
\end{equation*}
i.e., the accumulator $\mathcal{A}_{t'r'}$ represents a submatrix
of the matrix represented by $\mathcal{A}_{tr}$.
Splitting the accumulator is achieved by handling its components
individually:
\begin{itemize}
  \item For $A_{tr}$, we take advantage of the nested cluster basis
    (\ref{eq:transfer}),
    i.e., $W_{Y,r}|_{\hat r'\times k} = W_{Y,r'} F_{Y,r'}$ with the
    transfer matrix $F_{Y,r'}$, to find
    \begin{align*}
      (A_{tr} W_{Y,r}^*)|_{\hat t'\times\hat r'}
      &= A_{tr}|_{\hat t'\times k} (W_{Y,r}|_{\hat r'\times k})^*\\
      &= A_{tr}|_{\hat t'\times k} (W_{Y,r'} F_{Y,r'})^*
       = A_{tr}|_{\hat t'\times k} F_{Y,r'}^* W_{Y,r'}^*,
    \end{align*}
    so we choose $A_{t'r'} := A_{tr}|_{\hat t'\times k} F_{Y,r'}^*$.
  \item For $B_{tr}$, we again use (\ref{eq:transfer}), i.e.,
    $V_{X,t}|_{\hat t'\times k} = V_{X,t'} E_{X,t'}$ with the
    transfer matrix $E_{X,t'}$, to find
    \begin{align*}
      (V_{X,t} B_{tr}^*)|_{\hat t'\times\hat r'}
      &= V_{X,t}|_{\hat t'\times k} (B_{tr}|_{\hat r'\times k})^*\\
      &= V_{X,t'} E_{X,t'} (B_{tr}|_{\hat r'\times k})^*
       = V_{X,t'} (B_{tr}|_{\hat r'\times k} E_{X,t'}^*)^*,
    \end{align*}
    so we choose $B_{t'r'} := B_{tr}|_{\hat r'\times k} E_{X,t'}^*$.
  \item For $N_{tr}$, we can simply use $N_{t'r'} := N_{tr}|_{\hat
      t'\times\hat r'}$.
  \item For $\mathcal{P}_{tr}$, we take all triples
     $(s,X,Y)\in\mathcal{P}_{tr}$ and add the sub-products
     $X|_{\hat t'\times\hat s'} Y|_{\hat s'\times\hat r'}$
     for $s'\in\chil(s)$ to the accumulator $\mathcal{A}_{t'r'}$ using the
     procedure described above.
\end{itemize}
This algorithm provides us with accumulators $\mathcal{A}_{t'r'}$
that represents the submatrices
$\accu(\mathcal{A}_{tr})|_{\hat t'\times\hat r'}$ for all $t'\in\chil(t)$
and $r'\in\chil(r)$ exactly.

%
% --- Induced block structure
%
\subsection{Induced block structure}

The condition $\mathcal{P}_{tr}=\emptyset$ required to obtain a
practically useful representation of the accumulator cannot be
guaranteed for \emph{all} pairs $(t,r)$ of clusters, but we can
limit the number of pairs violating this condition.

%
% Lemma: Product admissibility
%
\begin{lemma}[Product admissibility]
\label{le:product_admissibility}
Let $t\in\ctI$ and $r\in\ctK$.
If $\mathcal{P}_{tr}$ is not empty, we have
\begin{equation*}
  \frac{\eta}{\eta+1} \dist(\Omega_t,\Omega_r)
  < \max\{\diam(\Omega_t),\diam(\Omega_s),\diam(\Omega_r)\}
\end{equation*}
for a cluster $s\in\ctJ$ with $(t,s)\in\ctIJ\setminus\lfIJ$ and
$(s,r)\in\ctJK\setminus\lfJK$.
\end{lemma}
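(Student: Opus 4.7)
The plan is to peel the statement in three layers. First I would trace through the accumulator update and splitting procedures to show that any triple $(s,X,Y)$ ever placed in a set $\mathcal{P}_{tr}$ must come with $(t,s)\in\ctIJ\setminus\lfIJ$ and $(s,r)\in\ctJK\setminus\lfJK$. Inspecting the four cases of the accumulator update, the only one that puts anything into $\mathcal{P}_{tr}$ is the ``otherwise'' case, which by construction is entered precisely when $(t,s)$ is neither an admissible nor an inadmissible leaf of $\ctIJ$ and $(s,r)$ is neither an admissible nor an inadmissible leaf of $\ctJK$. The splitting step re-inserts sub-products through the same four-case procedure on child clusters, so the invariant is preserved by induction on recursion depth. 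Hence $\mathcal{P}_{tr}\neq\emptyset$ supplies a cluster $s\in\ctJ$ of the required non-leaf type.

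Next, I would convert the non-leaf property into quantitative distance/diameter bounds. The block-tree construction retains a block as a leaf exactly when it is either admissible or has both clusters without children, so $(t,s)\in\ctIJ\setminus\lfIJ$ must have failed the admissibility test (\ref{eq:admissibility}), giving
\begin{equation*}
  2\eta\,\dist(\Omega_t,\Omega_s) < \max\{\diam(\Omega_t),\diam(\Omega_s)\},
\end{equation*}
and analogously
\begin{equation*}
  2\eta\,\dist(\Omega_s,\Omega_r) < \max\{\diam(\Omega_s),\diam(\Omega_r)\}.
\end{equation*}

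Finally, I would bound $\dist(\Omega_t,\Omega_r)$ from above by a triangle inequality for set distances: picking $a\in\Omega_t$ and $b_0\in\Omega_s$ almost realizing $\dist(\Omega_t,\Omega_s)$, and $b_1\in\Omega_s$ and $d\in\Omega_r$ almost realizing $\dist(\Omega_s,\Omega_r)$, the bound $\|b_0-b_1\|\leq\diam(\Omega_s)$ combined with $\|a-d\|\leq\|a-b_0\|+\|b_0-b_1\|+\|b_1-d\|$ and a limit argument yields
\begin{equation*}
  \dist(\Omega_t,\Omega_r) \leq \dist(\Omega_t,\Omega_s) + \diam(\Omega_s) + \dist(\Omega_s,\Omega_r).
\end{equation*}
Setting $M:=\max\{\diam(\Omega_t),\diam(\Omega_s),\diam(\Omega_r)\}$, the two non-admissibility estimates give each set-distance less than $M/(2\eta)$ while $\diam(\Omega_s)\leq M$, so $\dist(\Omega_t,\Omega_r) < M + M/\eta = M(\eta+1)/\eta$, which rearranges to the claimed inequality.

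The only subtle point is the first paragraph's bookkeeping: one has to be explicit that the ``otherwise'' branch is the sole source of entries in $\mathcal{P}_{tr}$ and that this property is propagated by the splitting routine to accumulators at lower levels. The remaining geometric step is a routine combination of a set-distance triangle inequality with the two failed admissibility conditions.
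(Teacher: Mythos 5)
Your proof is correct and follows essentially the same route as the paper: extract the non-leaf, hence non-admissible, blocks $(t,s)$ and $(s,r)$ from the membership of $(s,X,Y)$ in $\mathcal{P}_{tr}$, then combine the failed admissibility bounds with the set-distance triangle inequality $\dist(\Omega_t,\Omega_r)\leq\dist(\Omega_t,\Omega_s)+\diam(\Omega_s)+\dist(\Omega_s,\Omega_r)$. The only difference is presentational: you spell out the bookkeeping invariant for $\mathcal{P}_{tr}$ and the pointwise limit argument behind the triangle inequality, both of which the paper leaves implicit.
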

\begin{proof}
Assume that $\mathcal{P}_{tr}$ is not empty.
We choose a triple $(s,X,Y)\in\mathcal{P}_{tr}$.
By our construction, it can only have been added to $\mathcal{P}_{tr}$
if both $(t,s)$ and $(s,r)$ are non-leaf blocks in $\ctIJ$ and $\ctJK$,
i.e., if $(t,s)\in\ctIJ\setminus\lfIJ$ and $(s,r)\in\ctJK\setminus\lfJK$.
This means that $(t,s)$ and $(s,r)$ are not admissible, i.e.,
\begin{align*}
  \max\{\diam(\Omega_t),\diam(\Omega_s)\}
  &> 2 \eta \dist(\Omega_t,\Omega_s),\\
  \max\{\diam(\Omega_s),\diam(\Omega_r)\}
  &> 2 \eta \dist(\Omega_s,\Omega_r).
\end{align*}
Using the triangle inequality, we find
\begin{align*}
  \dist(\Omega_t,\Omega_r)
  &\leq \dist(\Omega_t,\Omega_s) + \diam(\Omega_s)
        + \dist(\Omega_s,\Omega_r)\\
  &< \frac{1}{2\eta} \max\{\diam(\Omega_t),\diam(\Omega_s)\}
     + \diam(\Omega_s)\\
  &\qquad + \frac{1}{2\eta} \max\{\diam(\Omega_s),\diam(\Omega_r)\}\\
  &\leq \left(1 + \frac{1}{\eta}\right)
        \max\{\diam(\Omega_t),\diam(\Omega_s),\diam(\Omega_r)\}.
\end{align*}
This is the inequality we have claimed.
\end{proof}

During the construction of the cluster trees and block trees, we
can usually ensure that $(t,s)\in\ctIJ\setminus\lfIJ$ implies
that $\diam(\Omega_t)$ and $\diam(\Omega_s)$ are of a similar size,
so that the condition of Lemma~\ref{le:product_admissibility}
is reduced to
\begin{equation*}
  \frac{\eta}{\eta+1} \dist(\Omega_t,\Omega_r)
  \lesssim \max\{\diam(\Omega_t),\diam(\Omega_r)\},
\end{equation*}
i.e., the set $\mathcal{P}_{tr}$ can only be non-empty if the
clusters $t$ and $r$ are relatively close to each other.
Comparing this condition to the admissibility condition
(\ref{eq:admissibility}) indicates that, for sufficiently
adjusted parameters, admissible pairs $(t,r)$ will correspond
to efficiently usable accumulators with $\mathcal{P}_{tr}=\emptyset$.

For cluster trees constructed by the usual algorithms, only
a limited number of clusters can be this close to each other,
therefore the cardinalities of the sets
\begin{align*}
  \mathcal{I}_t &:= \{ r\in\ctK\ :\ \mathcal{P}_{tr}\neq\emptyset,
      \ \level(t)=\level(r) \}
  &\text{ for all } t\in\ctI,\\
  \mathcal{I}_r &:= \{ t\in\ctI\ :\ \mathcal{P}_{tr}\neq\emptyset,
      \ \level(t)=\level(r) \}
  &\text{ for all } r\in\ctK
\end{align*}
are bounded by a constant $\Cp\in\bbbn$.

In order to keep track of all sub-products required to compute
the product of two $\mathcal{H}^2$-matrices $X\in\bbbr^{\Idx\times\Jdx}$
and $Y\in\bbbr^{\Jdx\times\Kdx}$ corresponding to block trees
$\ctIJ$ and $\ctJK$, it is useful to organize the products in
another tree structure.

%
% Definition: Product tree
%
\begin{definition}[Product tree]
Let $\ctIJ$ and $\ctJK$ be block trees for the cluster trees
$\ctI$, $\ctJ$ and $\ctJ$, $\ctK$, respectively.
Let $\mathcal{T}$ be a finite tree, where the children of a
node $p\in\mathcal{T}$ are denoted by $\chil(p)$.

$\mathcal{T}$ is called a \emph{product tree} for $\ctIJ$ and $\ctJK$ if
\begin{itemize}
  \item for every node $p\in\mathcal{T}$ there are $t\in\ctI$, $s\in\ctJ$
      and $r\in\ctK$ with $p=(t,s,r)$,
  \item the root $p=\treeroot(\mathcal{T})$ is the tuple of the roots
      of $\ctI$, $\ctJ$ and $\ctK$, i.e.,
      $p=(\treeroot(\ctI),\treeroot(\ctJ),\treeroot(\ctK))$,
  \item for every node $p=(t,s,r)\in\mathcal{T}$ we have
    \begin{equation*}
      \chil(p) = \{ (t',s',r')\in\ctI\times\ctJ\times\ctK
          \ :\ (t',s')\in\chil(t,s),\ (s',r')\in\chil(s,r) \}.
    \end{equation*}
\end{itemize}
The product tree for $\ctIJ$ and $\ctJK$ is usually denoted by
$\ctIJK$, and its leaves are denoted by $\lfIJK := \{ p\in\ctIJK
\ :\ \chil(p)=\emptyset \}$.
\end{definition}

Due to the definition, $p=(t,s,r)\in\ctIJK$ implies $(t,s)\in\ctIJ$
and $(s,r)\in\ctJK$.

If $p=(t,s,r)\in\ctIJK$ has children, the definition implies
$(t,s)\in\ctIJ\setminus\lfIJ$ and $(s,r)\in\ctJK\setminus\lfJK$,
i.e., $\mathcal{P}_{tr}\neq\emptyset$.

On the other hand, $p$ is a leaf if $(t,s)\in\lfIJ$ or $(s,r)\in\lfJK$,
and we can define the \emph{admissible} leaves as
\begin{equation*}
  \lfaIJK := \{ p=(t,s,r)\in\lfIJK\ :\ (t,s)\in\lfaIJ
               \text{ or } (s,r)\in\lfaJK \}.
\end{equation*}
For these admissible leaves $p=(t,s,r)\in\lfaIJK$, the products
$X|_{\hat t\times\hat s} Y|_{\hat s\times\hat r}$ are left or
right semi-uniform (\ref{eq:semiuniform}), and the same holds for
sums of these products, since semi-uniform matrices are a subspace.
$\mathcal{P}_{tr}=\emptyset$ holds if and only if all products
$(t,s,r)\in\ctIJK$ are admissible leaves.

Restricting the product tree to the first and last clusters,
i.e., taking the block $(t,r)$ for every $(t,s,r)\in\ctIJK$, yields
a block tree that is ideally suited for representing the product
$Z = X Y$.

%
% Definition: Induced block tree
%
\begin{definition}[Induced block tree]
Let $\ctIJK$ be a product tree for $\ctIJ$ and $\ctJK$.
Let $\mathcal{T}$ be a block tree for $\ctI$ and $\ctK$.

$\mathcal{T}$ is called the \emph{induced block tree} of $\ctIJK$
if for every $b\in\mathcal{T}$ there is a $p=(t,s,r)\in\ctIJK$
with $b=(t,r)$.

Its set of leaves is denoted by $\mathcal{L} := \{ b\in\mathcal{T}
\ :\ \chil(b)=\emptyset \}$, its set of admissible leaves is
denoted by
\begin{equation*}
  \mathcal{L}^+ := \{ (t,r)\in\mathcal{L}\ :\ 
    (t,s,r)\in\lfaIJK \text{ for all } s\in\ctJ
    \text{ with } (t,s,r)\in\ctIJK \},
\end{equation*}
and its set of inadmissible leaves by $\mathcal{L}^- := \mathcal{L}
\setminus\mathcal{L}^+$.
\end{definition}

We have $(t,r)\in\mathcal{T}$ if there is an $s\in\ctJ$ with
$(t,s,r)\in\ctIJK$.
$(t,r)$ has children if $(t,s,r)$ has children, and this is the
case if $\mathcal{P}_{tr}\neq\emptyset$.
We conclude that the induced block tree reflects how an accumulator
for the root block has to be split to empty $\mathcal{P}_{tr}$.

If $(t,r)\in\mathcal{L}^+$ is an admissible leaf of the induced
block tree $\mathcal{T}^+$, we have $(t,s,r)\in\lfaIJK$ for all
$s\in\ctJ$ with $(t,s,r)\in\ctIJK$, and therefore either
$(t,s)\in\lfaIJ$ or $(s,r)\in\lfaJK$.
This implies that $X|_{\hat t\times\hat s} Y|_{\hat s\times\hat r}$
is a semi-uniform matrix, and since semi-uniform matrices are
a subspace of $\bbbr^{\hat t\times\hat r}$, the entire submatrix
$(X Y)|_{\hat t\times\hat r}$ is also semi-uniform.

If, on the other hand, $(t,r)\in\mathcal{L}^-$ is an inadmissible
leaf of the induced block tree, we can find $s\in\ctJ$ with
$p:=(t,s,r)\in\ctIJK$ and $\chil(p)=\emptyset$, which implies
$\chil(t,s)=\emptyset$ or $\chil(s,r)=\emptyset$.
In the first case, both $t$ and $s$ are leaves of $\ctI$ and $\ctJ$.
In the second case, both $s$ and $r$ are leaves of $\ctJ$ and $\ctK$.
Usually the leaves of the cluster trees contain only a small
number of indices, therefore the submatrix $(X Y)|_{\hat t\times\hat r}$
has a small number of rows or columns and is therefore a low-rank
matrix.

We conclude that we can represent the \emph{exact} product
$X Y$ of two $\mathcal{H}^2$-matrices $X$ and $Y$ as a blockwise
low-rank matrix, where the block structure is defined by the
induced block tree $\mathcal{T}$.

% ------------------------------------------------------------
% Basis trees
% ------------------------------------------------------------
\section{Basis trees}

We take a closer look at the semi-uniform matrices (\ref{eq:semiuniform})
appearing during the construction of the matrix product.
So far, for admissible blocks $(s,r)\in\lfaJK$, we have considered right
semi-uniform matrices of the form
\begin{equation*}
  X|_{\hat t\times\hat s} Y|_{\hat s\times\hat r}
  = X|_{\hat t\times\hat s} V_{Y,s} S_{Y,sr} W_{Y,r}^*
  = A_{tr} W_{Y,r}^*,
\end{equation*}
which is valid for \emph{any} matrix $X|_{\hat t\times\hat s}$.
In our case, $X|_{\hat t\times\hat s}$ is an $\mathcal{H}^2$-matrix,
i.e., its admissible submatrices are expressed by the cluster basis
$V_X$, and we can take advantage of this property to make handling
the matrix $A_{tr}$ more efficient.

%
% --- Definition
%
\subsection{Definition}

If $(t,s)\in\lfaIJ$ is an admissible leaf, we have
$X|_{\hat t\times\hat s} = V_{X,t} S_{X,ts} W_{X,s}^*$ and therefore
\begin{equation*}
  X|_{\hat t\times\hat s} Y|_{\hat s\times\hat r}
  = V_{X,t} S_{X,ts} W_{X,s}^* V_{Y,s} S_{Y,sr} W_{Y,r}^*.
\end{equation*}
All of the \emph{basis product matrices} $P_{XY,s} := W_{X,s}^*
V_{Y,s}\in\bbbr^{k\times k}$ can be computed by an efficient recursive
procedure in $\mathcal{O}(n k^2)$ operations \cite[Algorithm~13]{BO10},
and once we have these matrices at our disposal, we can compute
$S_{XY,tr} := S_{X,ts} P_{XY,s} S_{Y,sr}$ in $\mathcal{O}(k^3)$ operations
to obtain
\begin{equation*}
  X|_{\hat t\times\hat s} Y|_{\hat s\times\hat r}
  = V_{X,t} S_{XY,tr} W_{Y,r}^*.
\end{equation*}
Let us consider a slightly more general case:
we assume $(s,r)\in\lfaIJ$, $(t,s)\in\ctIJ\setminus\lfIJ$ with
$\chil(t)=\{t_1,t_2\}$, $\chil(s)=\{s_1,s_2\}$ and
$(t_i,s_j)\in\lfaIJ$ for all $i,j\in\{1,2\}$.
Using the transfer matrices (\ref{eq:transfer}) for $V_{Y,s}$, we find
\begin{align*}
  X|_{\hat t\times\hat s} Y|_{\hat s\times\hat r}
  &= \begin{pmatrix}
      X|_{\hat t_1\times\hat s_1} & X|_{\hat t_1\times\hat s_2}\\
      X|_{\hat t_2\times\hat s_1} & X|_{\hat t_2\times\hat s_2}
    \end{pmatrix}
    V_{Y,s} S_{Y,sr} W_{Y,r}^*\\
  &= \begin{pmatrix}
      V_{X,t_1} S_{X,t_1s_1} W_{X,s_1}^* &
      V_{X,t_1} S_{X,t_1s_2} W_{X,s_2}^*\\
      V_{X,t_2} S_{X,t_2s_1} W_{X,s_1}^* &
      V_{X,t_2} S_{X,t_2s_2} W_{X,s_2}^*
    \end{pmatrix}
    \begin{pmatrix}
      V_{Y,s_1} E_{Y,s_1}\\
      V_{Y,s_2} E_{Y,s_2}
    \end{pmatrix} S_{Y,sr} W_{Y,r}^*\\
  &= \begin{pmatrix}
      V_{X,t_1} (S_{X,t_1s_1} P_{XY,s_1} E_{Y,s_1}
                + S_{X,t_1s_2} P_{XY,s_2} E_{Y,s_2}) S_{Y,sr}\\
      V_{X,t_2} (S_{X,t_2s_1} P_{XY,s_1} E_{Y,s_1}
                + S_{X,t_2s_2} P_{XY,s_2} E_{Y,s_2}) S_{Y,sr}
    \end{pmatrix} W_{Y,r}^*\\
  &= \begin{pmatrix}
       V_{X,t_1} C_{X,t_1}\\
       V_{X,t_2} C_{X,t_2}
     \end{pmatrix} W_{Y,r}^*
\end{align*}
with the $k\times k$ matrices
\begin{align*}
  C_{X,t_1} &:= (S_{X,t_1s_1} P_{XY,s_1} E_{Y,s_1}
                + S_{X,t_1s_1} P_{XY,s_2} E_{Y,s_2}) S_{Y,sr},\\
  C_{X,t_2} &:= (S_{X,t_2s_1} P_{XY,s_1} E_{Y,s_1}
                + S_{X,t_2s_2} P_{XY,s_2} E_{Y,s_2}) S_{Y,sr},
\end{align*}
i.e., we can represent the matrix $A_{tr}$ of the right semi-uniform
matrix $A_{tr} W_{Y,r}^*$ by just two $k\times k$ matrices instead
of one $|\hat t|\times k$ matrix.
Frequently we will have $k\ll |\hat t|$, so this new representation
may be far more efficient than the previous one.

In general, the matrix $X|_{\hat t\times\hat s}$ can have a considerably more
complex structure than in our example, since it may include an
arbitrary number of nested submatrices.
A tree-like structure is suitable to handle these cases.

We do not use the standard graph-theoretical definition of a tree
in this case, but a recursive one:
a tree can be either a leaf or a tuple of sub-trees.
For ease of notation, we include matrices like $C_{X,t}$ directly
in our definition.
We also want the basis tree to correspond to a sub-tree of the
cluster tree $\ctI$, so the number of children of a basis tree
is defined via the number of children of the associated cluster
in the cluster tree.

%
% Definition: Basis tree
%
\begin{definition}[Basis tree]
For all $t\in\ctI$ with $m=|\chil(t)|$, $\chil(t)=\{t_1,\ldots,t_m\}$,
we define recursively
\begin{equation*}
  \mathcal{B}_t
  = \begin{cases}
    (\bbbr^{k\times k} \times \bbbr^{k\times k})
    \cup
    (\bbbr^{k\times k} \times \bbbr^{k\times k}
    \times \mathcal{B}_{t_1} \times \ldots \times \mathcal{B}_{t_m})
    &\text{ if } m > 0,\\
    \bbbr^{k\times k} \times \bbbr^{\hat t\times k} \times \bbbr^{k\times k}
    &\text{ if } m = 0.
  \end{cases}
\end{equation*}
\end{definition}

A basis tree $\alpha\in\mathcal{B}_t$ can take one of three forms:
\begin{itemize}
  \item If $t\in\ctI$ is a leaf in $\ctI$, we have
    $\alpha = (C,N,M)$ with the coefficient matrix
    $C\in\bbbr^{k\times k}$ and the nearfield matrix
    $N\in\bbbr^{\hat t\times k}$.
  \item If $t\in\ctI$ is not a leaf in $\ctI$, we can have
    $\alpha = (C,M)$ with the coefficient matrix $C\in\bbbr^{k\times k}$, or
  \item we can have $\alpha = (C,M,\alpha_1,\ldots,\alpha_m)$
    with $\alpha_i\in\mathcal{B}_{t_i}$, $i\in\{1,\ldots,m\}$ and the
    coefficient matrix $C\in\bbbr^{k\times k}$.
    In this case we call $\alpha_1,\ldots,\alpha_m$ the \emph{children}
    of $\alpha$.
\end{itemize}
The matrix $M\in\bbbr^{k\times k}$ appearing in all three cases denotes
a transformation that is applied from the right to the matrix
represented by $\alpha$.
Its purpose will become clear when we use basis trees to represent
submatrices of the product.

We use basis trees to represent matrices resulting from the
multiplication of a cluster basis $V_{Y,s}$ with an
$\mathcal{H}^2$-matrix $X$.
In particular, we assign each $\alpha\in\mathcal{B}_t$ a matrix
$\bbbr^{\hat t\times k}$ that we call its \emph{yield}.

%
% Definition: Yield of a basis tree
%
\begin{definition}[Yield of a basis tree]
For all $t\in\ctI$ with $m=|\chil(t)|$, $\chil(t)=\{t_1,\ldots,t_m\}$
and all $\alpha\in\mathcal{B}_t$ we define recursively
\begin{align*}
  \yield(\alpha) &= \begin{cases}
    (V_{X,t} C + N) M &\text{ if } \alpha=(C,N,M),\\
    V_{X,t} C M &\text{ if } \alpha=(C,M),\\
    \begin{pmatrix}
      V_{X,t_1} E_{X,t_1} C + \yield(\alpha_1)\\
      \vdots\\
      V_{X,t_m} E_{X,t_m} C + \yield(\alpha_m)
    \end{pmatrix} M &\text{ if } \alpha=(C,M,\alpha_1,\ldots,\alpha_m).
  \end{cases}
\end{align*}
Due to Definition~\ref{de:cluster_tree}, we have
$\yield(\alpha)\in\bbbr^{\hat t\times k}$ for all $\alpha\in\mathcal{B}_t$.
\end{definition}

To obtain an intuitive understanding of the yield, assume $M=I$ and
extend the columns of $V_{X,t}$ by zero.
With this simplification, the yield of a basis tree $\alpha$ is just the
sum of the products $V_{X,t} C$ for all of its nodes $t$, plus the
zero-extended nearfield matrices $N$ if $t$ is a leaf.

%
% --- Product representation
%
\subsection{Product representation}

Given a block $b=(t,s)\in\ctIJ$ and a coupling matrix $S\in\bbbr^{k\times k}$,
our goal is to construct a basis tree $\alpha\in\mathcal{B}_t$ with
\begin{equation*}
  \yield(\alpha) = X|_{\hat t\times\hat s} V_{Y,s} S,
\end{equation*}
since this would give us a more efficient representation of
the matrix $A_{tr}$ appearing in (\ref{eq:right_semiuniform}).
Of course, a similar approach can be used for the matrix
$B_{tr}$ in (\ref{eq:left_semiuniform}).

If $(t,s)\in\lfiIJ$, our construction of the block tree $\ctIJ$
implies $t\in\lfI$ and we can simply use
\begin{equation}\label{eq:inadmbasistree}
  \alpha = \inadmbasistree(X|_{\hat t\times\hat s} V_{Y,s} S)
  := (0,X|_{\hat t\times\hat s} V_{Y,s} S,I) \in\mathcal{B}_t
\end{equation}
to obtain $\yield(\alpha) = X|_{\hat t\times\hat s} V_{Y,s} S$.

If $(t,s)\in\lfaIJ$, we have $X|_{\hat t\times\hat s} = V_{X,t}
S_{X,ts} W_{X,s}^*$ and can use
\begin{equation}\label{eq:admbasistree}
  \alpha = \admbasistree(S_{X,ts} P_{XY,s} S)
  := (S_{X,ts} P_{XY,s} S,0,I)\in\mathcal{B}_t
\end{equation}
to obtain again $\yield(\alpha) = V_{X,t} S_{X,ts} P_{XY,s}^* S
= X|_{\hat t\times\hat s} V_{Y,s} S$.
Here the products $P_{XY,s} = W_{X,s}^* V_{Y,s}$ are again assumed to have
been precomputed in advance.

The challenging case is $(t,s)\in\ctIJ\setminus\lfIJ$, i.e., if
$(t,s)$ is subdivided.
For ease of presentation, we restrict here to a $2\times 2$ block structure,
i.e., we assume $\chil(t)=\{t_1,t_2\}$ and $\chil(s)=\{s_1,s_2\}$ and
have the equation
\begin{equation*}
  X|_{\hat t\times\hat s}
  = \begin{pmatrix}
      X|_{\hat t_1\times\hat s_1} &
      X|_{\hat t_1\times\hat s_2}\\
      X|_{\hat t_2\times\hat s_1} &
      X|_{\hat t_2\times\hat s_2}
    \end{pmatrix}.
\end{equation*}
Using the transfer matrices (\ref{eq:transfer}), we find
\begin{equation*}
  V_{Y,s} = \begin{pmatrix}
    V_{Y,s_1} E_{Y,s_1}\\
    V_{Y,s_2} E_{Y,s_2}
  \end{pmatrix}
\end{equation*}
and therefore
\begin{align*}
  X|_{\hat t\times\hat s} V_{Y,s} S_{Y,sr}
  &= \begin{pmatrix}
      X|_{\hat t_1\times\hat s_1} &
      X|_{\hat t_1\times\hat s_2}\\
      X|_{\hat t_2\times\hat s_1} &
      X|_{\hat t_2\times\hat s_2}
    \end{pmatrix}
    \begin{pmatrix}
      V_{Y,s_1} E_{Y,s_1} S\\    
      V_{Y,s_2} E_{Y,s_2} S
    \end{pmatrix}\\
  &= \begin{pmatrix}
      X|_{\hat t_1\times\hat s_1} V_{Y,s_1} E_{Y,s_1} S
      + X|_{\hat t_1\times\hat s_2} V_{Y,s_2} E_{Y,s_2} S\\
      X|_{\hat t_2\times\hat s_1} V_{Y,s_1} E_{Y,s_1} S
      + X|_{\hat t_2\times\hat s_2} V_{Y,s_2} E_{Y,s_2} S
     \end{pmatrix}.
\end{align*}
We can see that each summand has the same shape as the
original matrix, so we assume that we can apply recursion to
find basis trees $\alpha_{11},\alpha_{12}\in\mathcal{B}_{t_1}$ and
$\alpha_{21},\alpha_{22}\in\mathcal{B}_{t_2}$ with
\begin{align*}
  \yield(\alpha_{11})
  &= X|_{\hat t_1\times\hat s_1} V_{Y,s_1} E_{Y,s_1} S, &
  \yield(\alpha_{12})
  &= X|_{\hat t_1\times\hat s_2} V_{Y,s_2} E_{Y,s_2} S,\\
  \yield(\alpha_{21})
  &= X|_{\hat t_2\times\hat s_1} V_{Y,s_1} E_{Y,s_1} S, &
  \yield(\alpha_{22})
  &= X|_{\hat t_2\times\hat s_2} V_{Y,s_2} E_{Y,s_2} S.
\end{align*}
To match our definition, we have to find $\alpha_1\in\mathcal{B}_{t_1}$
and $\alpha_2\in\mathcal{B}_{t_2}$ with
\begin{align*}
  \yield(\alpha_1)
  &= \yield(\alpha_{11}) + \yield(\alpha_{12}), &
  \yield(\alpha_2)
  &= \yield(\alpha_{21}) + \yield(\alpha_{22}),
\end{align*}
because then $\alpha := (0, I, \alpha_1, \alpha_2) \in \mathcal{B}_t$
would satisfy $\yield(\alpha) = X|_{\hat t\times\hat s} V_{Y,s} S$ by
definition.

In order to achieve this goal, we have to make sure that the
structures of $\alpha_{11}$ and $\alpha_{12}$ as well as
$\alpha_{21}$ and $\alpha_{22}$ match, because then adding them
together is straightforward.

To this end, we introduce operations on basis trees that change
the representation without changing the yield.
We use the auxiliary function
\begin{align*}
  \trans(\alpha, X)
  &:= \begin{cases}
    (C, N, MX) &\text{ if } \alpha=(C,N,M),\\
    (C, MX) &\text{ if } \alpha=(C,M),\\
    (C, MX, \alpha_1, \ldots, \alpha_m)
    &\text{ if } \alpha=(C,M,\alpha_1,\ldots,\alpha_m)
  \end{cases}
  &\text{ for } \alpha\in\mathcal{B}_t
\end{align*}
to apply a transformation to a basis tree such that
\begin{align*}
  \yield(\trans(\alpha, X))
  &= \yield(\alpha) X &
  &\text{ for all } \alpha\in\mathcal{B}_t,\ X\in\bbbr^{k\times k}.
\end{align*}
If we want to work with the matrices $C$ or $N$, we have to get
rid of potentially pending transformations.
This can be achieved by the function
\begin{align*}
  \finish(\alpha)
  &:= \begin{cases}
    (CM, NM, I) &\text{ if } \alpha=(C,N,M),\\
    (CM, I) &\text{ if } \alpha=(C,M),\\
    (CM, I, \trans(\alpha_1,M), \ldots, \trans(\alpha_m,M))
    &\text{ if } \alpha=(C,M,\alpha_1,\ldots,\alpha_m)
    \end{cases}
\end{align*}
for $\alpha\in\mathcal{B}_t$ that replaces $C$ and $N$ with
their transformed counterparts and shifts the transformation
towards the children if children exist.

Adding two basis tree representations together is particularly
easy if both have the same structure.
If $t\in\lfI$, we can simply use
\begin{align*}
  \add(\alpha,\beta)
  &:= (C_\alpha M_\alpha + C_\beta M_\beta,
       N_\alpha M_\alpha + N_\beta M_\beta, I)
\end{align*}
for $\alpha=(C_\alpha,N_\alpha,M_\alpha),
\beta=(C_\beta,N_\beta,M_\beta)\in\mathcal{B}_t$.

If $t\in\ctI\setminus\lfI$, we can handle the case that
both $\alpha$ and $\beta$ are of the same type easily,
i.e., if $\alpha=(C_\alpha,M_\alpha)$ and $\beta=(C_\beta,M_\beta)$,
we use
\begin{equation*}
  \add(\alpha,\beta)
  := (C_\alpha M_\alpha + C_\beta M_\beta, I),
\end{equation*}
while for $\alpha=(C_\alpha,M_\alpha,\alpha_1,\ldots,\alpha_m),
\beta=(C_\beta,M_\beta,\beta_1,\ldots,\beta_m)\in\mathcal{B}_t$
we proceed by recursion via
\begin{align*}
  \add(\alpha,\beta)
  &:= (C_\alpha M_\alpha + C_\beta M_\beta, I,\\
  &\qquad \add(\trans(\alpha_1,M_\alpha),\trans(\beta_1,M_\beta)),\ldots,
      \add(\trans(\alpha_m,M_\alpha),\trans(\beta_m,M_\beta))),
\end{align*}
i.e., we add the coefficient matrices and all sub-trees, taking
pending transformations into account.

The case $\alpha=(C_\alpha,M_\alpha)$ and $\beta=(C_\beta,M_\beta,
\beta_1,\ldots,\beta_m)$ requires a little more work: we
``split'' $\alpha$ into sub-trees
\begin{align*}
  \alpha_i
  &:= \begin{cases}
        (E_{X,t_i} C_\alpha M_\alpha, 0, I) &\text{ if } t_i\in\lfI,\\
        (E_{X,t_i} C_\alpha M_\alpha, I) &\text{ otherwise}
      \end{cases} &
  &\text{ for all } i\in\{1,\ldots,m\}
\end{align*}
and define
\begin{equation*}
  \bsplit(\alpha) := (0, I, \alpha_1, \ldots, \alpha_m).
\end{equation*}
Due to the transfer matrix equation (\ref{eq:transfer}), we find
\begin{align*}
  \yield(\bsplit(\alpha))
  &= \yield(0, I, \alpha_1,\ldots,\alpha_m)\\
  &= \begin{pmatrix}
       V_{X,t_1} E_{X,t_1} C_\alpha M_\alpha\\
       \vdots\\
       V_{X,t_m} E_{X,t_m} C_\alpha M_\alpha
    \end{pmatrix}
   = V_{X,t} C_\alpha M_\alpha
   = \yield(\alpha),
\end{align*}
i.e., $\bsplit(\alpha)$ is an alternative representation of $\alpha$
that shares the structure of $\beta$ so that we have
\begin{align*}
  \yield(\alpha) + \yield(\beta)
  &= \yield(\bsplit(\alpha)) + \yield(\beta)\\
  &= \yield(0,I,\alpha_1,\ldots,\alpha_m)
   + \yield(C_\beta,M_\beta,\beta_1,\ldots,\beta_m)
\end{align*}
and can proceed as in the previous case to use
\begin{equation*}
  \add(\alpha,\beta)
  := \add(\bsplit(\alpha),\beta)
\end{equation*}
to obtain $\yield(\add(\alpha,\beta))=\yield(\alpha)+\yield(\beta)$.

%
% Figure: Adding a product to a basis tree
%
\begin{figure}
  \begin{quotation}
    \begin{tabbing}
      \textbf{procedure} addproduct($X|_{\hat t\times\hat s}$, $S$,
                                    \textbf{var} $\alpha$);\\
      \textbf{begin}\\
      \quad\= \textbf{if} $(t,s)\in\lfaIJ$ \textbf{then begin}\\
      \> \quad\= $\beta \gets \admbasistree(S_{X,ts} P_{XY,s} S)$;
          \qquad $\alpha \gets \add(\alpha,\beta)$\\
      \> \textbf{end else if} $(t,s)\in\lfiIJ$ \textbf{then begin}\\
      \> \> $\beta \gets \inadmbasistree(X|_{\hat t\times\hat s} V_{Y,s} S)$;
          \qquad $\alpha \gets \add(\alpha,\beta)$\\
      \> \textbf{end else begin}\\
      \> \> \textbf{if} $\alpha$ not split \textbf{then}\\
      \> \> \quad\= $\alpha \gets \bsplit(\alpha)$;\\
      \> \> \textbf{for} $t'\in\chil(t)$,\ $s'\in\chil(s)$ \textbf{do}\\
      \> \> \> addproduct($X|_{\hat t'\times\hat s'}$, $E_{Y,s'} S$,
                          $\alpha_{t'}$)\\
      \> \textbf{end}\\
      \textbf{end}
    \end{tabbing}
  \end{quotation}
  \caption{Adding a product $X|_{\hat t\times\hat s} V_{Y,s} S$ to a
           basis tree}
  \label{fi:addproduct}
\end{figure}

Now we have everything at our disposal to construct a basis tree
for the product $X|_{\hat t\times\hat s} V_{Y,s} S$.
The resulting algorithm is summarized in Figure~\ref{fi:addproduct}.
For an admissible block $(t,s)\in\lfaIJ$, we use the function
``uniform'' from (\ref{eq:admbasistree}) to construct $\beta\in\mathcal{B}_t$
with $\yield(\beta)=X|_{\hat t\times\hat s} V_{Y,s} S$ and add it to $\alpha$.
For an inadmissible leaf $(t,s)\in\lfiIJ$, we use the function
``nearfield'' from (\ref{eq:inadmbasistree}) instead.
If $(t,s)\in\ctIJ\setminus\lfIJ$ is subdivided, we use the function
``split'' to make sure that $\alpha$ is also subdivided and then
add the submatrices of the block to the children of $\alpha$ recursively.

%
% Figure: Minimal basis tree for a submatrix
%
\begin{figure}
  \pgfdeclareimage[width=11cm]{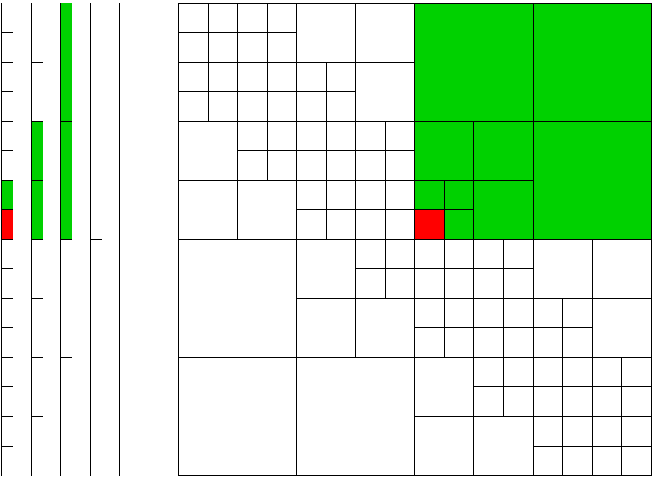}{minimaltree}
  \begin{center}
  \begin{pgfpicture}{0cm}{0cm}{11cm}{8.0cm}
    \pgfuseimage{minimaltree}
  \end{pgfpicture}
  \end{center}
  \caption{Minimal (row) basis tree for a submatrix: The coloured matrix
  blocks on the right correspond to the coloured row clusters on the left.}
  \label{fi:minimaltree}
\end{figure}

Figure~\ref{fi:minimaltree} shows the result of this procedure when
applied to the (green and red) right upper quadrant of an
$\mathcal{H}^2$-matrix.
We can see that the algorithm ensures that a cluster $t$ appears in the
basis tree if and only if there is a block $(t,s)$ in the submatrix.

%
% Remark: Complexity
%
\begin{remark}[Complexity]
It is important to keep in mind that the functions ``uniform'', ``nearfield'',
and ``add'' require only $\mathcal{O}(k^3)$ operations, and the
same holds for ``split'', assuming that a cluster only has a bounded
number of children.

Since the function ``addproduct'' in Figure~\ref{fi:addproduct} calls itself
recursively only if the current block $(t,s)$ is subdivided, we may conclude
that this function requires only $\mathcal{O}(k^3)$ operations for every
descendant of the block $(t,s)$.

Given that the function constructs an \emph{exact} representation
of the product, this may be considered the optimal complexity.
\end{remark}

%
% --- Basis tree accumulator
%
\subsection{Basis tree accumulator}

With efficient basis trees at our disposal, we can turn back to
the improvement of the accumulators introduced in
Definition~\ref{de:accumulator}.

We have seen that we can represent the matrices $A_{tr}$ and
$B_{tr}$ of an accumulator by basis trees, i.e.,
\begin{align*}
  A_{tr} &= \yield(\alpha_{tr}), &
  B_{tr} &= \yield(\beta_{tr}),
\end{align*}
where $\alpha_{tr}\in\mathcal{B}_t$ and $\beta_{tr}\in\mathcal{B}_r$.
We already know that performing the updates
\begin{align*}
  A_{tr} &\gets A_{tr} + X|_{\hat t\times\hat s} V_{Y,s} S_{Y,sr}, &
  B_{tr} &\gets B_{tr} + Y|_{\hat s\times\hat r}^* W_{Y,s} S_{X,ts}^*
\end{align*}
requires only $\mathcal{O}(|\desc(t,s)|\,k^3)$ and
$\mathcal{O}(|\desc(s,r)|\,k^3)$ operations, respectively,
using this special representation.

We still need an efficient way of splitting an accumulator.
Handling the nearfield $N_{tr}$ and the subdivided products
$\mathcal{P}_{tr}$ can remain as before, but the special
representation of $A_{tr}$ and $B_{tr}$ requires a new approach.

We consider a splitting step applied to an accumulator for
the submatrix $(t,r)$ with children $(t',r')$, $t'\in\chil(t)$,
$r'\in\chil(r)$.
For the right-semiuniform part we require
\begin{equation*}
  A_{t'r'} = A_{tr}|_{\hat t'\times k} F_{Y,r'}^*,
\end{equation*}
while the left-semiuniform part leads to
\begin{equation*}
  B_{t'r'} = B_{tr}|_{\hat r'\times k} E_{X,r'}^*.
\end{equation*}
We have already designed the basis tree representation in order
to make transformations efficient, therefore we can use
\begin{align*}
  A_{tr} F_{Y,r'}^*
  &= \yield(\alpha_{tr}) F_{Y,r'}^*
   = \yield(\trans(\alpha_{tr}, F_{Y,r'}^*)),\\
  B_{tr} E_{X,t'}^*
  &= \yield(\beta_{tr}) E_{X,t'}^*
   = \yield(\trans(\beta_{tr}, E_{X,t'}^*))
\end{align*}
to obtain intermediate representations in $\mathcal{O}(k^3)$
operations.

This leaves us with the task of restricting $A_{tr}$ to the
rows in $\hat t'$ for a child $t'\in\chil(t)$.
If $t$ is a leaf, no such child exists, so we only have to
consider $\alpha_{tr} = (C, M)$ and
$\alpha_{tr} = (C, M, \alpha_1, \ldots, \alpha_m)$.

The first case is straightforward: we define
\begin{equation*}
  \alpha_{t'r} := \begin{cases}
     (E_{X,t'} C M, 0, I) &\text{ if } t'\in\lfI,\\
     (E_{X,t'} C M, I) &\text{ otherwise}
  \end{cases}
\end{equation*}
and use (\ref{eq:transfer}) to obtain
\begin{equation*}
  \yield(\alpha_{t'r}) = V_{X,t'} E_{X,t'} C M
  = V_{X,t}|_{\hat t'\times k} C M
  = \yield(\alpha_{tr})|_{\hat t'\times k}
  = A_{tr}|_{\hat t'\times k}.
\end{equation*}
The second case $\alpha_{tr} = (C, M, \alpha_1, \ldots, \alpha_m)$
poses a challenge if $C\neq 0$, since $\yield(\alpha_{tr})$ is
influenced by $C$, but the yields of children are not.
Once again we can use (\ref{eq:transfer}) to solve this problem:
$C$ contributes to $\yield(\alpha_{tr})$ via
\begin{equation*}
  V_{X,t} C M,
\end{equation*}
and this product's restriction to the rows in $t'$ can be expressed
by
\begin{equation*}
  (V_{X,t} C M)|_{\hat t'\times k}
  = V_{X,t}|_{\hat t'\times k} C M
  = V_{X,t'} E_{X,t'} C M.
\end{equation*}
This allows us to add the contribution of the root to the coefficient
matrices of the children.
We know that there is an $i\in\{1,\ldots,m\}$ with
$\alpha_i\in\mathcal{B}_{t'}$ by definition, and we modify $\alpha_i$
to take care of the ancestor's contribution:
\begin{align*}
  \hat\alpha_i &:= \finish(\trans(\alpha_i, M)),\\
  \alpha_{t'r} &:= \begin{cases}
    (C_i + E_{X,t'} C M, I) &\text{ if } \hat\alpha_i = (C_i, I),\\
    (C_i + E_{X,t'} C M, N_i, I) &\text{ if } \hat\alpha_i = (C_i, N_i, I),\\
    (C_i + E_{X,t'} C M, I, \alpha_{i1}, \ldots, \alpha_{in})
    &\text{ if } \hat\alpha_i = (C_i, I, \alpha_{i1}, \ldots, \alpha_{in}).
  \end{cases}
\end{align*}
In the first step, we apply pending transformations $M$ of $\alpha$
and $M_i$ of $\alpha_i$ to obtain $\hat\alpha_i$, in the second step,
we then add the contribution of the parent's coefficient matrix
to the child.
As we have seen before, this guarantees
\begin{equation*}
  \yield(\alpha_{t'r})
  = \yield(\alpha_i) + V_{X,t'} E_{x,t'} C M
  = \yield(\alpha_{tr})|_{\hat t'\times k}
  = A_{tr}|_{\hat t'\times k}.
\end{equation*}
Once again we observe that $\mathcal{O}(k^3)$ operations are sufficient
to construct $A_{t'r'}$ from $A_{tr}$.
By the same arguments, we can also construct $B_{t'r}$ from $B_{tr}$ and
conclude that splitting an accumulator in basis tree representation
requires only $\mathcal{O}(k^3)$ instead of
$\mathcal{O}((|\hat t|+|\hat r|)k^2)$ operations.

%
% Remark: Implementation
%
\begin{remark}[Implementation]
The key advantage of the basis tree representation is that most algorithms
require only $\mathcal{O}(k^3)$ operations, i.e., to apply a transformation,
to accumulate transformations, to add an admissible or inadmissible block,
or to restrict to a submatrix.

In a practical implementation, this behaviour can indeed be guaranteed
by using a copy-on-write approach:
instead of changing the representation of a basis tree in place, we always
create and work with copies of the affected nodes, thus avoiding
inconsistencies.
\end{remark}

% ------------------------------------------------------------
% Adaptive bases
% ------------------------------------------------------------
\section{Adaptive bases}

We have seen that we can obtain semi-uniform representations
of all submatrices of the product in $\mathcal{O}(n k^2 \log n)$
operations, and we are faced with two challenges:
the preliminary representation will generally have a finer
block structure than the one we would like to used, and it
will also consist of semi-uniform submatrices, while we require
an $\mathcal{H}^2$-matrix in order to be able to perform further
operations.

%
% --- Coarsening
%
\subsection{Coarsening}
For the first challenge, we can use a very simple approach:
if we have a block matrix consisting of low-rank submatrices,
a well-known approach called \emph{agglomeration} in
\cite[Section~2.6.2]{HA15} can be used to merge multiple low-rank
sub-matrices into one large low-rank matrix.

Let $t\in\ctI$, $s\in\ctJ$, and $\chil(t)=\{t_1,t_2\}$,
and assume that we have low-rank matrices $A B^*$ and $C D^*$
with $A\in\bbbr^{\hat t_1\times k}$, $B,D\in\bbbr^{\hat s\times k}$,
$C\in\bbbr^{\hat t_2\times k}$ at our disposal.
We are looking for a low-rank approximation of the block matrix
\begin{equation*}
  G := \begin{pmatrix}
    A B^*\\
    C D^*\\
  \end{pmatrix}
  = \begin{pmatrix}
      A & \\
      & C
    \end{pmatrix}
    \begin{pmatrix}
      B & D
    \end{pmatrix}^*
\end{equation*}
and start by computing a thin Householder factorization
\begin{equation*}
  \begin{pmatrix}
    B & D
  \end{pmatrix}
  = Q R = Q \begin{pmatrix} R_1 & R_2 \end{pmatrix}
\end{equation*}
with matrices $R_1,R_2\in\bbbr^{(2k)\times k}$ and an isometric matrix
$Q\in\bbbr^{s\times (2k)}$.
This implies
\begin{equation*}
  G = \begin{pmatrix}
    A B^*\\
    C D^*
  \end{pmatrix}
  = \begin{pmatrix}
    A & \\
    & C
  \end{pmatrix}
  \begin{pmatrix} R_1^*\\ R_2^* \end{pmatrix} Q^*
  = \begin{pmatrix}
    A R_1^*\\
    C R_2^*
  \end{pmatrix} Q^*,
\end{equation*}
and the condensed matrix
\begin{equation*}
  \widehat{G} := \begin{pmatrix}
    A R_1^*\\ C R_2^*
  \end{pmatrix}\in\bbbr^{\hat t\times(2k)}
\end{equation*}
has only $2k$ columns, allowing us to compute its singular value
decomposition
\begin{equation*}
  \widehat{G} = U \Sigma \widehat{V}^*
\end{equation*}
in $\mathcal{O}(|\hat t| k^2)$ operations to obtain
\begin{equation*}
  G = \widehat{G} Q^* = U \Sigma \widehat{V}^* Q^*,
\end{equation*}
a (thin) singular value decomposition of the original matrix $G$.
Computing $V = Q \widehat{V}$ requires only
$\mathcal{O}(|\hat s| k^2)$ operations, so we can obtain the thin
singular value decomposition of $G$ efficiently.
Now truncating to a low-rank approximation is straightforward,
e.g., we can simply drop the smallest singular values and keep
just the first left and right singular vectors in $U$ and $V$.

Whenever the block structure of the intermediate representation
of the product is too fine, we can apply this agglomeration procedure
repeatedly to merge all offending sub-matrices into one large
low-rank matrix.

%
% Remark: Error control
%
\begin{remark}[Error control]
Since we are using singular value decompositions, we can perfectly
control the error introduced in each step \cite[Theorem~2.11]{HA15}.

We choose a relative tolerance $\epsilon\in\bbbr_{>0}$ for the
truncation operations.

Let us consider the agglomeration of a block matrix
\begin{equation*}
  G = \begin{pmatrix}
    G_{11} & \ldots & G_{1m}\\
    \vdots & \ddots & \vdots\\
    G_{n1} & \ldots & G_{nm}
  \end{pmatrix}.
\end{equation*}
If we first sequentially agglomerate every row, i.e.,
\begin{align*}
  \widetilde{G}_{i1} &= G_{i1}, &
  \widetilde{G}_{ij} &\approx \begin{pmatrix}
      \widetilde{G}_{i,j-1} & G_{ij}
  \end{pmatrix} &
  &\text{ for all } i\in[1:n],\ j\in[2:m],
\end{align*}
we can use the singular values to ensure
\begin{align*}
  \|\begin{pmatrix}
      \widetilde{G}_{i,j-1} & G_{ij}
    \end{pmatrix} - \widetilde{G}_{ij}\|_2
  &\leq \epsilon
    \|\begin{pmatrix}
       \widetilde{G}_{i,j-1} & G_{ij}
      \end{pmatrix}\|_2 &
  &\text{ for all } i\in[1:n],\ j\in[2:m],
\end{align*}
and since orthogonal projections cannot increase the spectral norm,
we also have
\begin{align*}
  \|\begin{pmatrix}
      \widetilde{G}_{i,j-1} & G_{ij}
    \end{pmatrix} - \widetilde{G}_{ij}\|_2
  &\leq \epsilon \|\begin{pmatrix}
      G_{i1} & \ldots & G_{ij}
    \end{pmatrix}\|_2 &
  &\text{ for all } i\in[1:n],\ j\in[2:m].
\end{align*}
For the entire row, we can use a telescoping sum to find
\begin{align*}
  \|\begin{pmatrix}
     G_{i1} & \ldots & G_{im}
    \end{pmatrix} - \widetilde{G}_{im} \|_2
  &\leq \epsilon (m-1)
   \|\begin{pmatrix}
      G_{i1} & \ldots & G_{im}
     \end{pmatrix}\|_2 &
  &\text{ for all } i\in[1:n].
\end{align*}
The triangle inequality allows us to apply this estimate to every
block row of $G$ to get the estimate
\begin{equation*}
  \left\|G - \begin{pmatrix} \widetilde{G}_{1m}\\
                        \vdots\\
                        \widetilde{G}_{nm} \end{pmatrix}\right\|_2
  \leq \sum_{i=1}^n \|\begin{pmatrix} G_{i1} & \ldots & G_{im}
                     \end{pmatrix} - \widetilde{G}_{im} \|_2
  \leq \epsilon(m-1) n \|G\|_2.
\end{equation*}
To obtain a low-rank approximation of the entire matrix $G$, we
can now sequentially agglomerate the agglomerated rows
$\widetilde{G}_{1m},\ldots,\widetilde{G}_{nm}$ by the same technique
to find a low-rank matrix $\widetilde{G}$ with
\begin{equation*}
  \left\|\begin{pmatrix} \widetilde{G}_{1m}\\
                         \vdots\\
                         \widetilde{G}_{nm} \end{pmatrix}
         - \widetilde{G}\right\|_2
  \leq \epsilon (n-1)
    \left\|\begin{pmatrix} \widetilde{G}_{1m}\\
                           \vdots\\
                           \widetilde{G}_{nm} \end{pmatrix}
    \right\|_2.
\end{equation*}
The total error can be obtained by the triangle inequality:
\begin{align*}
  \|G - \widetilde{G}\|_2
  &\leq \left\|G - \begin{pmatrix}
                     \widetilde{G}_{1m}\\
                     \vdots\\
                     \widetilde{G}_{nm}
                   \end{pmatrix}\right\|_2
   + \left\| \begin{pmatrix}
               \widetilde{G}_{1m}\\
               \vdots\\
               \widetilde{G}_{nm}
             \end{pmatrix} - \widetilde{G} \right\|_2\\
  &\leq \left\|G - \begin{pmatrix}
                     \widetilde{G}_{1m}\\
                     \vdots\\
                     \widetilde{G}_{nm}
                   \end{pmatrix}\right\|_2
   + \epsilon (n-1) \left\| \begin{pmatrix}
                               \widetilde{G}_{1m}\\
                               \vdots\\
                               \widetilde{G}_{nm}
                            \end{pmatrix} \right\|_2\\
  &\leq \left\|G - \begin{pmatrix}
                     \widetilde{G}_{1m}\\
                     \vdots\\
                     \widetilde{G}_{nm}
                   \end{pmatrix}\right\|_2
   + \epsilon (n-1) \|G\|_2
   + \epsilon (n-1) \left\| G - \begin{pmatrix}
                               \widetilde{G}_{1m}\\
                               \vdots\\
                               \widetilde{G}_{nm}
                            \end{pmatrix} \right\|_2\\
  &\leq \epsilon (m-1) n \|G\|_2
      + \epsilon (n-1) \|G\|_2
      + \epsilon^2 (n-1) (m-1) n \|G\|_2\\
  &= \epsilon \|G\|_2 \bigl((m-1) n + (n-1) + \epsilon (n-1) n (m-1)\bigr)\\
  &\leq \epsilon \|G\|_2 (m n + \epsilon m n^2).
\end{align*}
If the standard admissibility condition is used, the numbers $n$ and $m$
of the block rows and columns are bounded by a small constant.
Given any relative accuracy $\hat\epsilon\in(0,1]$ for the final result,
we can choose the relative truncation tolerance
$\epsilon := \frac{\hat\epsilon}{2 m n}$ in order to obtain
\begin{equation*}
  \epsilon ( m n + \epsilon m n^2 )
  = \frac{\hat\epsilon}{2 m n}
    \left( m n + \frac{\hat\epsilon}{2 m n} m n^2 \right)
  = \frac{\hat\epsilon}{2}
    + \frac{\hat\epsilon^2}{4 m}
  = \hat\epsilon\left( \frac{1}{2} + \frac{\hat\epsilon}{4 m} \right)
  < \hat\epsilon.
\end{equation*}
This allows us to guarantee $\|G-\widetilde{G}\|_2 \leq \hat\epsilon \|G\|_2$,
i.e., the coarsening procedure can provide any block-relative error
we may require.
\end{remark}

%
% --- H^2-matrix recompression
%
\subsection{$\mathcal{H}^2$-matrix recompression}

The coarsening procedure allows us to obtain an given block structure
for the approximation of the result, but we sacrifice the representation
of the matrices by semi-uniform matrices and basis trees.
In essence, we obtain a standard $\mathcal{H}$-matrix approximation
of the product $XY$ of two $\mathcal{H}^2$-matrices.

If we want to use this product in further $\mathcal{H}^2$-matrix
algorithms, we have to include another approximation step that
restores the $\mathcal{H}^2$-matrix structure with suitable cluster
bases.
Fortunately, there is an efficient algorithm for this task
\cite[Section~6.5]{BO10} that allows us to guarantee a user-defined
accuracy.

Let $G\in\bbbr^{\Idx\times\Jdx}$ denote the hierarchical matrix
approximation of the product $XY$ for the prescribed block tree
$\ctIK$.
We are looking for a row cluster basis that can approximate
all blocks connected to a given row cluster
\begin{align*}
  \brow(t) &:= \{ r\in\ctK\ :\ (t,r)\in\lfaIK \} &
  &\text{ for all } t\in\ctI\\
\intertext{and for a column cluster basis that can approximate
all blocks connected to a given column cluster}
  \bcol(r) &:= \{ t\in\ctI\ :\ (t,r)\in\lfaIK \} &
  &\text{ for all } r\in\ctK.
\end{align*}
This is, however, not all that our cluster bases have to
accomplish: due to the nested structure (\ref{eq:transfer}),
a cluster basis $V_t$ has to be able to approximate not only
all blocks in $\brow(t)$, but also parts of all blocks
connected to the \emph{predecessors} of $t$, since the bases
for these predecessors are defined using transfer matrices
and are therefore not independent from their descendants.

To keep track of \emph{all} blocks we have to approximate,
we introduce the extended sets
\begin{align*}
  \brow^*(t) &:= \begin{cases}
    \brow(t) \cup \brow^*(t^+)
    &\text{ if there is } t^+\in\ctI \text{ with } t\in\chil(t^+),\\
    \brow(t) &\text{ otherwise}
  \end{cases} &
  &\text{ for all } t\in\ctI,\\
  \bcol^*(r) &:= \begin{cases}
    \bcol(r) \cup \bcol^*(r^+)
    &\text{ if there is } r^+\in\ctI \text{ with } r\in\chil(r^+),\\
    \bcol(r) &\text{ otherwise}
  \end{cases} &
  &\text{ for all } r\in\ctK
\end{align*}
that include all column or row clusters, respectively, connected
to a cluster or one of its ancestors.

We focus on the construction of an adaptive row cluster basis,
since the exact same procedure can be applied to the adjoint
matrix $G^*$ to obtain a column cluster basis.
To avoid redundant information, we assume that we are looking for
an isometric cluster basis, i.e., that
\begin{align*}
  V_t^* V_t &= I &
  &\text{ holds for all } t\in\ctI.
\end{align*}
This ensures that the columns of $V_t$ provide an orthonormal
basis of its range and that $V_t V_t^*$ is an orthogonal projection
into this range, so that $V_t V_t^* G|_{\hat t\times\hat s}$ is
the best approximation of $G|_{\hat t\times\hat s}$ within this
range with respect to the Frobenius und spectral norm.

Since we are looking for a row cluster basis that can handle
all admissible blocks equally well, we require
\begin{align*}
  V_t V_t^* G|_{\hat t\times\hat r}
  &\approx G|_{\hat t\times\hat r} &
  &\text{ for all } t\in\ctI,\ r\in\brow^*(t).
\end{align*}
We assume that $G$ is already an $\mathcal{H}$-matrix, i.e., that
for every $r\in\brow(t)$ we have a low-rank factorization
\begin{align*}
  G|_{\hat t\times\hat r}
  &= A_{tr} B_{tr}^*, &
  A_{tr} &\in \bbbr^{\hat t\times k},\ B_{tr}\in\bbbr^{\hat r\times k}
\end{align*}
at our disposal.
We can take advantage of this fact to reduce the computational
work:
we compute a thin QR factorization $B_{tr} = Q_{tr} R_{tr}$ with
an isometric matrix $Q_{tr}\in\bbbr^{\hat r\times k}$ and a
upper triangular matrix $R_{tr}\in\bbbr^{k\times k}$ and observe
\begin{equation*}
  \|G|_{\hat t\times\hat r} - V_t V_t^* G|_{\hat t\times\hat r}\|_2
  = \|(A_{tr} R_{tr}^* - V_t V_t^* A_{tr} R_{tr}^*) Q_{tr}^*\|_2
  = \|A_{tr} R_{tr}^* - V_t V_t^* A_{tr} R_{tr}^*\|_2,
\end{equation*}
since right-multiplication by an adjoint isometric matrix does
not change the spectral or Frobenius norm, i.e., we can replace the
admissible blocks $G|_{\hat t\times\hat r}$ by ``condensed'' matrices
$G^c_{tr} := A_{tr} R_{tr}^*$ with only $k$ columns without changing
the approximation quality.

Now that we know what we are looking for, we can find an appropriate
algorithm.
If $t$ is a leaf cluster, we have to construct an isometric matrix $V_t$
directly such that
\begin{align}\label{eq:Vt_approx}
  V_t V_t^* G_{tr}^c &\approx G_{tr}^c &
  &\text{ for all } r\in\brow^*(t).
\end{align}
This task can be solved by combining all admissible blocks in a large
matrix
\begin{align*}
  G_t &:= \begin{pmatrix}
    G_{tr_1}^c & \ldots & G_{tr_m}^c
  \end{pmatrix}, &
  \brow^*(t) &= \{ r_1,\ldots,r_m \},
\end{align*}
computing its singular value decomposition and using the first $k$
left singular vectors to form the columns of $V_t$.
The resulting error is given by
\begin{equation*}
  \|G_t - V_t V_t^* G_t\|_2 = \sigma_{k+1},
\end{equation*}
where $\sigma_{k+1}$ is the largest singular value dropped.
By choosing the rank $k$ appropriately, we can ensure that any required
accuracy is guaranteed.

If $t$ is not a leaf cluster, Definition~\ref{de:cluster_basis} implies
that $V_t$ depends on the bases $V_{t'}$ for the children $t'\in\chil(t)$,
so it is advisable to compute $V_{t'}$ first.

To keep the presentation simple, we focus on the case of two children
$\chil(t)=\{t_1,t_2\}$ and remark that the extension to the general
case is straightforward.
Once isometric matrices $V_{t_1}$ and $V_{t_2}$ for the two children
have been computed, (\ref{eq:transfer}) implies
\begin{align*}
  V_t &= \begin{pmatrix}
    V_{t_1} E_{t_1}\\
    V_{t_2} E_{t_2}
  \end{pmatrix}
  = \begin{pmatrix}
    V_{t_1} & \\
    & V_{t_2}
  \end{pmatrix}
  \begin{pmatrix}
    E_{t_1}\\
    E_{t_2}
  \end{pmatrix}
  = U_t \widehat{V}_t
\end{align*}
with the matrices
\begin{align}\label{eq:Ut_Vhatt}
  U_t &:= \begin{pmatrix}
    V_{t_1} & \\
    & V_{t_2}
  \end{pmatrix}, &
  \widehat{V}_t := \begin{pmatrix}
    E_{t_1}\\
    E_{t_2}
  \end{pmatrix}.
\end{align}
The isometric matrix $U_t$ is already defined by the children, and
since we want $V_t$ to be isometric, too, we have to ensure
$\widehat{V}_t^* \widehat{V}_t = I$.

To determine $\widehat{V}_t$, we take a look at the required
approximation properties:
we need (\ref{eq:Vt_approx}) for all $r\in\brow^*(t)$, i.e., the error
\begin{align}
  \|G_{tr}^c - V_t V_t^* G_{tr}^c\|_2^2\label{eq:error_decomp}
  &= \|G_{tr}^c - U_t U_t^* G_{tr}^c
       + U_t U_t^* G_{tr}^c - U_t (\widehat{V}_t \widehat{V}_t^*) U_t^*
         G_{tr}^c\|_2^2\\
  &= \|G_{tr}^c - U_t U_t^* G_{tr}^c\|_2^2
     + \|\widehat{G}_{tr}^c - \widehat{V}_t \widehat{V}_t^*
         \widehat{G}_{tr}^c\|_2^2
  \qquad\text{ with } \widehat{G}_{tr}^c := U_t^* G_{tr}^c\notag
\end{align}
has to be under control for all $r\in\brow^*(t)$.
The first term is determined by the children, and we have already
ensured that it can be made arbitrarily small by choosing the
childrens' ranks sufficiently large.

This leaves us only with the second term.
We follow the same approach as before: we combine all admissible
blocks in a large matrix
\begin{align*}
  \widehat{G}_t &:= U_t^* G_t
  = \begin{pmatrix}
       U_t^* G_{tr_1}^c & \ldots & U_t^* G_{tr_m}^c
    \end{pmatrix}, &
  \brow(t) &= \{r_1,\ldots,r_m\}
\end{align*}
and compute its singular value decomposition.
The first $k$ left singular vectors form the columns of $\widehat{V}_t$,
and splitting $\widehat{V}_t$ according to (\ref{eq:Ut_Vhatt}) provides
us with the transfer matrices $E_{t_1}$ and $E_{t_2}$.

In order to set up the matrix $\widehat{G}_t$, we require the matrices
$U_t^* G_{tr}^c$ for all $r\in\brow^*(t)$, and computing these
products explicitly would be too time-consuming.
By definition, we have $r\in\brow^*(t_1)$ and $r\in\brow^*(t_2)$
and therefore
\begin{equation*}
  U_t^* G_{tr}^c
  = \begin{pmatrix}
       V_{t_1}^* G_{t_1 r}^c\\
       V_{t_2}^* G_{t_2 r}^c
    \end{pmatrix}.
\end{equation*}
To take advantage of this equation, we introduce auxiliary
matrices $\widehat{G}_{tr}^c := V_t^* G_{tr}^c$ for all
$t\in\ctI$, $r\in\brow^*(t)$.
On one hand, with these matrices at our disposal, we can quickly
set up $\widehat{G}_t$ by copying appropriate submatrices.

On the other hand, we can prepare the matrices $\widehat{G}_{tr}^c$
efficiently:
for leaf clusters $t$, $G_{tr}^c$ has only a small number of rows,
allowing us to compute $\widehat{G}_{tr}^c$ directly.
For non-leaf clusters, we have
\begin{equation*}
  \widehat{G}_{tr}^c = V_t^* G_{tr}^c
  = \widehat{V}_t^* U_t^* G_{tr}^c
  = \widehat{V}_t^* \begin{pmatrix}
      V_{t_1}^* G_{t_1 r}^c\\
      V_{t_2}^* G_{t_2 r}^c
    \end{pmatrix}
  = \widehat{V}_t^* \begin{pmatrix}
      \widehat{G}_{t_1 r}^c\\
      \widehat{G}_{t_2 r}^c
    \end{pmatrix}
\end{equation*}
and can compute $\widehat{G}_{tr}^c$ in $\mathcal{O}(k^3)$ operations.
With this modification, we can construct a row basis with guaranteed
accuracy in $\mathcal{O}(n k^2 \log n)$ operations.
This same holds for the column basis.

Once we have the row and the column basis at our disposal, we can
again take advantage of the factorized form
$G|_{\hat t\times\hat r} = A_{tr} B_{tr}^*$ to compute the corresponding
coupling matrices
\begin{align*}
  V_t^* G|_{\hat t\times\hat r} W_r
  &= V_t^* A_{tr} (W_r^* B_{tr})^* &
  &\text{ for all } b=(t,r)\in\lfaIK
\end{align*}
with $\mathcal{O}((|\hat t|+|\hat r|)k^2)$ operations per admissible
block for a total complexity of $\mathcal{O}(n k^2 \log n)$.

The entire $\mathcal{H}^2$-matrix compression requires
$\mathcal{O}(n k^2 \log n)$ operations under standard assumptions
\cite[Section~6.5]{BO10}.

%
% --- Error control
%
\subsection{Error control}

We have already seen that we can adjust the coarsening procedure
to ensure block-relative error control.
We can introduce a small modification to the $\mathcal{H}^2$-matrix
recompression algorithm that provides similar error control
following \cite[Section~6.8]{BO10}.

As before, we focus only on the row basis and briefly remark that
the column basis can be treated in exactly the same way by applying
the algorithm to the adjoint matrix $G^*$ with the adjoint blocks
$(s,t)$ instead of $(t,s)\in\lfaIJ$.

In order to ensure a block-relative error bound for an admissible
block $b=(t,r)\in\lfaIK$, we recall that it has a low-rank representation
$G|_{\hat t\times\hat r} = A_{tr} B_{tr}^*$.
We have already seen that we can apply an orthogonal transformation
from the right to obtain a condensed matrix $G_{tr}^c$ with only $k$
columns such that $\|G|_{\hat t\times\hat r}\|_2 = \|G_{tr}^c\|_2$, and
we can use a few steps of the power iteration to obtain a lower bound
for the spectral norm.

We are looking for block-relative error control, i.e., we would
like to ensure
\begin{equation*}
  \|G|_{\hat t\times\hat r} - V_t V_t^* G|_{\hat t\times\hat r}\|_2
  \leq \hat\epsilon \|G|_{\hat t\times\hat r}
  \qquad\iff\qquad
  \|G_{tr}^c - V_t V_t G_{tr}^c\|_2
  \leq \hat\epsilon \|G_{tr}^c\|_2
\end{equation*}
for a given $\hat\epsilon\in\bbbr_{>0}$.
For leaf clusters $t\in\lfI$, we have $V_t$ are our disposal and can
use it to control the error directly.
For non-leaf clusters $t\in\ctI\setminus\lfI$, we recall
(\ref{eq:error_decomp}) to obtain
\begin{align*}
  \|G_{tr}^c - V_t V_t^* G_{tr}^c\|_2^2
  &= \|G_{tr}^c - U_t U_t^* G_{tr}^c\|_2^2
   + \|\widehat{G}_{tr}^c - \widehat{V}_t \widehat{V}_t^*
       \widehat{G}_{tr}^c\|_2^2\\
  &\leq \|G_{t_1r}^c - V_{t_1} V_{t_1}^* G_{t_1r}^c\|_2^2
   + \|G_{t_2r}^c - V_{t_2} V_{t_2}^* G_{t_2r}^c\|_2^2
   + \|\widehat{G}_{tr}^c - \widehat{V}_t \widehat{V}_t^*
       \widehat{G}_{tr}^c\|_2^2,
\end{align*}
i.e., the error is bounded by an orthogonal sum of the errors in the children
and the error added by the parent.
A simple induction yields
\begin{equation*}
  \|G_{tr}^c - V_t V_t^* G_{tr}^c\|_2^2
  \leq \sum_{s\in\desc(t)} \|\widehat{G}_{sr}^c
           - \widehat{V}_s \widehat{V}_s^* \widehat{G}_{sr}^c\|_2^2,
\end{equation*}
where we have added $\widehat{G}_{sr}^c = G_{sr}^c$ and
$\widehat{V}_s = V_s$ for leaf clusters $s\in\lfI$ for consistency.

In order to keep the relative error under control, we introduce
weight factors
\begin{align*}
  \omega_{sr} &:= \theta^{(\level(s)-\level(t))/2} \|G_{tr}^c\|_2 &
  &\text{ for all } s\in\desc(t),\ (t,r)\in\lfaIJ
\end{align*}
and scale the submatrices $\widehat{G}_{sr}^c$ by $\omega_{sr}^{-1}$
so that the singular value decomposition now guarantees
\begin{align*}
  \|\widehat{G}^c_{sr} - \widehat{V}_s \widehat{V}_s^*
    \widehat{G}^c_{sr}\|_2^2 &\leq \epsilon^2 \omega_{sr}^2 &
  \text{ for all } s\in\desc(t),\ (t,r)\in\lfaIJ.
\end{align*}
If we assume that $|\chil(s)|\leq\sigma$ holds for all $s\in\desc(t)$,
a simple induction yields that there are no more than
$\sigma^{\ell-\level(t)}$ descendants of $t$ on level $\ell$, and we can
use the geometric sum to find
\begin{align*}
  \|G_{tr}^c - V_t V_t^* G_{tr}^c\|_2^2
  &\leq \sum_{s\in\desc(t)} \epsilon^2 \theta^{\level(s)-\level(t)}
           \|G_{tr}^c\|_2^2\\
  &\leq \epsilon^2 \|G_{tr}^c\|_2^2
           \sum_{\ell=\level(t)}^\infty (\sigma\theta)^{\ell-\level(t)}
   = \epsilon^2 \|G_{tr}^c\|_2^2 \frac{1}{1 - \sigma\theta}.
\end{align*}
If we want to ensure a block-relative error below $\hat\epsilon$,
we only have to choose $\theta < 1/\sigma$ and
$\epsilon := \hat\epsilon \sqrt{1-\sigma\theta}$.
We conclude that it is enough to introduce scaling factors to the
matrices $\widehat{G}_{tr}^c$ in the basis construction to ensure
block-relative error control in every admissible block
simultaneously.

% ------------------------------------------------------------
% Numerical experiments
% ------------------------------------------------------------
\section{Numerical experiments}

The new algorithm for the approximation of the product of two
$\mathcal{H}^2$-matrices starts by computing an \emph{exact}
intermediate representation of the product, then applies simple
low-rank truncations to coarsen the resulting block structure,
and then converts the intermediate approximation into the
final $\mathcal{H}^2$-matrix.

Since both the coarsening algorithm and the final conversion
are based on singular value decompositions, we can perfectly
control the resulting error.
This is a major advantage compared to standard $\mathcal{H}$-matrix
methods that have to apply compression already to intermediate
results and therefore cannot guarantee blockwise relative accuracies.

To demonstrate the performance of the new algorithm, we set up
classical boundary integral operators for the Laplace equation:
the single-layer matrix
\begin{align*}
  g_{ij} &= \int_{\partial\Omega} \varphi_i(x)
           \int_{\partial\Omega} g(x,y) \varphi_j(y) \,dy \,dx &
  &\text{ for all } i,j\in\{1,\ldots,n\}\\
\intertext{and the double-layer matrix}
  k_{ij} &= \int_{\partial\Omega} \varphi_i(x)
           \int_{\partial\Omega} \frac{\partial g}{\partial n(y)} \varphi_j(y)
                   \,dy \,dx &
  &\text{ for all } i,j\in\{1,\ldots,n\}
\end{align*}
with piecewise constant basis functions $(\varphi_i)_{i=1}^n$ and the
Laplace kernel function
\begin{align*}
  g(x,y) &= \begin{cases}
              \frac{1}{4\pi \|x-y\|} &\text{ if } x\neq y,\\
              0 &\text{ otherwise}
            \end{cases} &
  &\text{ for all } x,y\in\bbbr^3.
\end{align*}
For the single-layer matrix $G$, we use the surface of the unit
sphere $\Omega=\{x\in\bbbr^3\ :\ \|x\|_2\leq 1\}$, equipped with a
triangular mesh.

Since the double-layer operator on the unit sphere is very similar
to the single-layer operator, we use the surface of the \emph{unit cube}
$\Omega=[-1,1]^3$ for the double-layer matrix, again with a triangular
mesh.

We use hybrid cross approximation \cite{BOGR04} to construct a preliminary
$\mathcal{H}$-matrix approximation of the matrices $G$ and $K$ and then
use the algorithm of \cite[Section~6.5]{BO10} to obtain the
$\mathcal{H}^2$-matrix required for the new algorithm.
Then the new algorithm is used to approximate the products $G^2$ and $K^2$,
respectively.

%
% Table: Single-layer matrix
%
\begin{table}
  \begin{equation*}
    \begin{array}{r|rrrr}
      n & \text{Row}/s & \text{Col}/s & \text{Mem/MB} & \text{rel. error}\\
      \hline
     2\,048 & 1.4 & 0.7 & 4.7 & 2.6_{-5}\\
     4\,608 & 4.9 & 2.8 & 10.6 & 2.8_{-5}\\
     8\,192 & 10.8 & 6.1 & 20.7 & 3.0_{-5}\\
    18\,432 & 32.7 & 19.3 & 42.1 & 3.5_{-5}\\
    32\,768 & 68.2 & 40.6 & 77.5 & 4.0_{-5}\\
    73\,728 & 187.9 & 113.5 & 162.4 & 3.8_{-5}\\
   131\,072 & 379.2 & 232.0 & 298.1 & 4.0_{-5}\\
   294\,912 & 986.5 & 607.4 & 628.8 & 4.0_{-5}\\
   524\,288 & 1930.6 & 1199.5 & 1191.8 & 4.0_{-5}\\
1\,179\,648 & 4945.0 & 3081.6 & 2474.6 & 4.1_{-5}\\
2\,097\,152 & 9446.3 & 5785.2 & 4662.1 & 4.1_{-5}
    \end{array}
  \end{equation*}
  \caption{Multiplication of $\mathcal{H}^2$-matrices for the
    single-layer operator}
  \label{ta:slp}
\end{table}

Table~\ref{ta:slp} shows the results of a first series of experiments,
carried out on one core of an AMD EPYC 7713 processor using the
reference BLAS and LAPACK libraries for the single-layer matrix.
The first column gives the matrix dimension, ranging from $2\,048$ to
$2\,097\,152$.
The second column gives the time in seconds required to construct the
intermediate basis-tree representation of the product and to choose the
row cluster basis.
The third column gives the time in seconds requires for the column cluster
basis.
Our implementation re-uses the basis-tree representation that was
provided for the construction of the row basis, thereby saving time.
The fourth column gives the storage requirements in MB for the
$\mathcal{H}^2$-matrix product.
The fifth column lists the relative spectral error of the result,
estimated by ten steps of the power iteration.

%
% Table: Double-layer matrix
%
\begin{table}
  \begin{equation*}
    \begin{array}{r|rrrr}
      n & \text{Row}/s & \text{Col}/s & \text{Mem/MB} & \text{rel. error}\\
      \hline
     3\,072 & 6.0 & 2.7 & 16.6 & 7.3_{-6}\\
     6\,912 & 24.4 & 14.3 & 38.4 & 7.4_{-6}\\
    12\,288 & 50.3 & 28.4 & 67.1 & 8.1_{-6}\\
    27\,648 & 175.6 & 110.7 & 151.9 & 8.0_{-6}\\
    49\,152 & 332.8 & 203.5 & 258.6 & 8.4_{-6}\\
   110\,592 & 1056.3 & 680.1 & 587.3 & 9.1_{-6}\\
   196\,608 & 1903.4 & 1192.9 & 1008.7 & 9.7_{-6}\\
   442\,368 & 6211.1 & 4259.7 & 2377.2 & 1.0_{-5}\\
   786\,432 & 10138.6 & 64864.1 & 4130.9 & 1.1_{-5}\\
1\,769\,472 & 29551.4 & 19127.9 & 9928.6 & 1.1_{-5}\\
3\,145\,728 & 49076.4 & 32054.4 & 17353.2 & 1.3_{-5}
    \end{array}
  \end{equation*}
  \caption{Multiplication of $\mathcal{H}^2$-matrices for the
    double-layer operator}
  \label{ta:dlp}
\end{table}

Table~\ref{ta:dlp} contains similar results for the double-layer matrix.

We have prescribed an error bound of $10^{-4}$ for this experiment, and
seeing that the error obtained by our algorithm is always below this
bound allows us to conclude that the error control strategy works.

%
% Figure: Time per degree of freedom
%
\begin{figure}
  \pgfdeclareimage[width=12cm]{h2arith}{h2arith_mul}

  \begin{pgfpicture}{0cm}{0cm}{12cm}{7.5cm}
    \pgfuseimage{h2arith}
  \end{pgfpicture}
  \caption{Runtime per degree of freedom for the matrix multiplication}
  \label{fi:runtime}
\end{figure}

Figure~\ref{fi:runtime} illustrates that our experiments confirm
the theoretical bound of $\mathcal{O}(n k^2 \log n)$ for the run-time
of the new algorithm:
if we divide the run-time by the matrix dimension $n$ and plot the
result using a logarithmic scale for $n$, we can see that the
time per degree of freedom appears to grow like $\log n$, just as
predicted by our theory.

In conclusion, we have found an algorithm that approximates the
product of two $\mathcal{H}^2$-matrices by a new $\mathcal{H}^2$-matrix
using adaptively chosen cluster bases.
The algorithm constructs the \emph{exact} product in an intermediate
step, allowing us to guarantee, e.g., \emph{block-relative} error
bounds for the result.
Using the specially-designed basis-tree representations, the entire
computation can be performed in $\mathcal{O}(n k^2 \log n)$
operations, and both this efficiency and the validity of the error
control strategy are confirmed by our experiments.

The application of this new algorithm to more challenging applications
like the construction of matrix inverses and triangular factorizations
is the subject of ongoing research.

\bibliography{hmatrix}
\bibliographystyle{plain}

\end{document}